\documentclass[11pt]{amsart}

\usepackage{lmodern}
\usepackage{microtype}
\usepackage[english]{babel}
\usepackage{mathtools}
\usepackage{hyperref}
\usepackage[hyperpageref]{backref}
\usepackage[msc-links]{amsrefs}

\theoremstyle{plain} 
\newtheorem{theorem}{Theorem}
\newtheorem{lemma}[theorem]{Lemma}
\newtheorem{corollary}[theorem]{Corollary}

\theoremstyle{definition}
\newtheorem*{definition}{Definition}
\newtheorem{example}[theorem]{Example}

\DeclareMathOperator{\mre}{Re} 
\DeclareMathOperator{\mult}{Mult}
\newcommand{\sigmaa}{\sigma_{\mathrm{a}}}

\begin{document} 
\title{Bohr sets for multiplier algebras of complete Pick spaces}
\date{\today} 

\author{Ole Fredrik Brevig} 
\address{Department of Mathematical Sciences, Norwegian University of Science and Technology (NTNU), 7491 Trondheim, Norway} 
\email{ole.brevig@ntnu.no}

\author{Kristian Seip}
\address{Department of Mathematical Sciences, Norwegian University of Science and Technology (NTNU), 7491 Trondheim, Norway} 
\email{kristian.seip@ntnu.no}

\author{Ilya Zlotnikov}
\address{Department of Mathematics, King's College London, Strand, London, WC2R 2LS, United Kingdom} 
\email{ilia.zlotnikov@kcl.ac.uk}

\thanks{Ole Fredrik Brevig was supported by Grant 354537 of the Research Council of Norway. Kristian Seip and Ilya Zlotnikov were supported by Grant 334466 of the Research Council of Norway.}

\begin{abstract}
	We reexamine F. Wiener's classical proof that the Bohr radius of $H^\infty$ is $1/3$. We replace $H^\infty$ by the multiplier algebras of complete Pick spaces and describe the Bohr set for a wide class of such algebras. In particular, we show that the Bohr set of the multiplier algebra of the Drury--Arveson space $H^2_d$ is the closed $\ell^2$-ball of radius $1/3$. Other cases covered by our description of Bohr sets include Dirichlet-type spaces of analytic functions in the unit disc and a class of Hilbert spaces of ordinary Dirichlet series, studied earlier by McCarthy and Shalit. 
\end{abstract}

\subjclass{Primary 46E22. Secondary 30H50, 30B50}

\maketitle

\section{Introduction}
The majorant series of an analytic function $\varphi(z) = \sum_{j\geq0} a_j z^j$ in the unit disc $\mathbb{D}$ is
\[M\varphi(z)\coloneq \sum_{j=0}^\infty \lvert a_j z^j \rvert .\]
A well-known solution of F.~Wiener to a problem of Bohr \cite{Bohr1914} is that the bound
\[M\varphi(z) \leq \|\varphi\|_{H^\infty}\]
holds for every $\varphi$ in $H^\infty$ if and only if $\lvert z \rvert \leq 1/3$, where as usual $H^\infty$ is the algebra of bounded analytic functions in $\mathbb{D}$ with the supremum norm. Accordingly, we say that the \emph{Bohr set} of $H^\infty$ is $\{z \in \mathbb{D}\,:\, \lvert z \rvert \leq 1/3\}$ and that its \emph{Bohr radius} is $\varrho(H^\infty)=1/3$.

The purpose of this paper is to elucidate the Bohr phenomenon, starting from the facts that $H^\infty$ is the multiplier algebra of the classical Hardy space $H^2$ and that the Szeg\H{o} kernel
\[k(z,w) = \frac{1}{1-\overline{w}z}\]
is the reproducing kernel of $H^2$ in $\mathbb{D}$. Our reexamination of the Bohr--Wiener proof reveals that it applies in the context of complete Pick spaces, with many classical Hilbert spaces of analytic functions in the unit disc and Hilbert spaces of ordinary Dirichlet series appearing as special cases. 

Some preparation is required to accurately state our main result.

\begin{definition}
	Let $X$ be a set. We say that a reproducing kernel Hilbert space $\mathcal{H}_k$ on $X$ with kernel $k$  is \emph{admissible} if there is a countable abelian semigroup $(\Lambda,+)$ with identity $0$ and an orthogonal basis $\{e_\lambda\}_{\lambda \in \Lambda}$ for $\mathcal{H}_k$ satisfying $e_0 \equiv 1$ and $e_\lambda e_\mu = e_{\lambda+\mu}$, and if there is a point $x_0$ in $X$ such that $k(x,x_0) \equiv 1$.
\end{definition}

If we set $b_\lambda = \|e_\lambda\|_{\mathcal{H}_k}^{-2}>0$ and expand the reproducing kernel of $\mathcal{H}_k$ in the orthogonal basis $\{e_\lambda\}_{\lambda \in \Lambda}$, we obtain
\[k(x,y) = \sum_{\lambda \in \Lambda} b_\lambda e_\lambda(x) \overline{e_\lambda(y)},\]
where the series converges absolutely on $X \times X$ by the Cauchy--Schwarz inequality and Parseval's identity. Every $f$ in $\mathcal{H}_k$ enjoys a unique expansion
\[f = \sum_{\lambda \in \Lambda} \widehat{f}(\lambda) e_\lambda,\]
which is absolutely convergent in $X$. By comparing coefficients in the expansion of $k(x,x_0)$, we see that $b_0 = 1$ and that $e_\lambda(x_0) = 0$ for every $\lambda \neq 0$. In particular, we get that $\widehat{f}(0) = f(x_0)$ for every $f$ in $\mathcal{H}_k$.

The multiplier algebra $\mathcal{M}_k$ consists of those functions $\varphi$ on $X$ such that $f \mapsto \varphi f$ defines a bounded linear operator $M_\varphi$ on $\mathcal{H}_k$, and $\|\varphi\|_{\mathcal{M}_k} \coloneq \|M_\varphi\|$. Since $e_0 \equiv 1$ lies in $\mathcal{H}_k$ with $\|e_0\|_{\mathcal{H}_k} = 1$, we have $\varphi = M_\varphi e_0$, so $\mathcal{M}_k$ is contractively contained in $\mathcal{H}_k$ and the coefficients $\widehat{\varphi}(\lambda)$ are well-defined. 

The majorant function of $\varphi$ in $\mathcal{M}_k$ is
\[M\varphi(x) \coloneq \sum_{\lambda \in \Lambda} \lvert \widehat{\varphi}(\lambda) e_\lambda(x) \rvert.\]
By the Cauchy--Schwarz inequality and the bound $\|\varphi\|_{\mathcal{H}_k} \leq \|\varphi\|_{\mathcal{M}_k}$, we have
\[M\varphi(x) \leq \|\varphi\|_{\mathcal{M}_k} \sqrt{k(x,x)}\]
for every $x$ in $X$. Since $k(x,x) \geq b_0 = k(x_0,x_0) = 1$, the \emph{Bohr set} of $\mathcal{M}_k$ is defined by requiring that this inequality holds with $\sqrt{k(x,x)}$ replaced by its smallest possible value $1$ attained at $x=x_0$, that is
\[\mathcal{B}(\mathcal{M}_k) \coloneq \{ x \in X \,:\, M\varphi(x) \leq \|\varphi\|_{\mathcal{M}_k} \text{ for every } \varphi \in \mathcal{M}_k \}.\]

We now add one further assumption on the kernel.  
 
\begin{definition}
	An admissible reproducing kernel Hilbert space $\mathcal{H}_k$ on $X$ has the \emph{diagonal complete Pick property} if $k\not\equiv 1$ and if $k = 1 + \kappa k$ for a kernel of the form
	\[\kappa(x,y) = \sum_{\lambda \neq 0} c_\lambda e_\lambda(x) \overline{e_\lambda(y)}\]
	with $c_\lambda \geq 0$. We call $\kappa$ the \emph{reciprocal kernel} of $k$.
\end{definition}

Note that if $\mathcal{H}_k$ has the diagonal complete Pick property, then $k$ has no zeros, so we have $\kappa = 1-1/k$. The diagonal complete Pick property implies the complete Pick property and is equivalent to it in classical settings.\footnote{See the discussion preceding Corollary~\ref{cor:powerseries} and Corollary~\ref{cor:diriseries} below.} 

Our main result is that the Bohr set of $\mathcal{M}_k$ admits a complete description when $\mathcal{H}_k$ satisfies the above assumptions.

\begin{theorem} \label{thm:main}
	If $\mathcal{H}_k$ is an admissible reproducing kernel Hilbert space on a set $X$ with the diagonal complete Pick property, then
	\[\mathcal{B}(\mathcal{M}_k) = \left\{ x \in X \,:\, k(x,x) \leq \frac{9}{8} \right\}.\]
\end{theorem}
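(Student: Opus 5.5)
The plan is to prove the two inclusions separately. Throughout, write $s(x) = \sqrt{\kappa(x,x)} = \sqrt{1-1/k(x,x)}$, and note that $k(x,x)\le 9/8$ is equivalent to $s(x)\le 1/3$. The argument mirrors Wiener's proof. The role of $z$ is played by the normalized reciprocal kernel function $\kappa(\cdot,x)/s(x)$, and the role of the Möbius maps is played by the holomorphic functional calculus of a contraction. The number $1/3$ should then appear exactly as in the classical case, through the requirement $s(1+2a)\le 1$ as $a\to 1^-$.

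\emph{The inclusion ``$\subseteq$''.} Fix $x$ with $k(x,x)>9/8$, so that $\kappa(x,x)>0$, and put
\[\psi(y) = \frac{\kappa(y,x)}{s(x)} = \sum_{\lambda\neq0} \frac{c_\lambda \overline{e_\lambda(x)}}{s(x)}\, e_\lambda(y).\]
From $k=1+\kappa k$ we have $(1-\kappa)k = 1 \geq 0$. Since $\kappa(\cdot,x)$ is a column of a kernel dominated in this sense, the standard complete Pick argument gives $\|\psi\|_{\mathcal{M}_k}\le 1$: write $\kappa = \sum_\lambda (\sqrt{c_\lambda}e_\lambda)\overline{(\sqrt{c_\lambda}e_\lambda)}$, so that the row $(\sqrt{c_\lambda}e_\lambda)_\lambda$ is a row contraction, and $\psi$ is this row applied to a unit vector. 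Moreover $\psi(x_0)=0$. For $0<a<1$ set
\[\varphi = \frac{a-\psi}{1-a\psi} = a-(1-a^2)\sum_{n\ge0}a^n\psi^{n+1}.\]
This is a contractive multiplier by von Neumann's inequality applied to the contraction $M_\psi$. Because $e_\lambda e_\mu = e_{\lambda+\mu}$, every coefficient of $\psi^{n+1}$, after multiplication by $e_\nu(x)$, is a sum of nonnegative terms $\prod_j c_{\lambda_j}|e_{\lambda_j}(x)|^2/s(x)$. Hence no cancellation occurs in the majorant, and
\[M\varphi(x) = a+(1-a^2)\frac{s(x)}{1-a s(x)}.\]
This exceeds $1$ precisely when $s(x)(1+2a)>1$, which holds for $a$ close to $1$ since $s(x)>1/3$. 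So $x\notin\mathcal{B}(\mathcal{M}_k)$.

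\emph{The inclusion ``$\supseteq$''.} Let $\|\varphi\|_{\mathcal{M}_k}\le1$ and $a=\widehat\varphi(0)=\varphi(x_0)$. After a rotation we may assume $0\le a<1$. Set $\psi=(a-\varphi)/(1-a\varphi)$. This is again a contractive multiplier by von Neumann's inequality, it satisfies $\psi(x_0)=0$, and $\varphi=(a-\psi)/(1-a\psi)$. The majorant is submultiplicative, $M(fg)\le Mf\,Mg$, which follows from $e_\lambda e_\mu=e_{\lambda+\mu}$. Expanding $\varphi$ as above therefore gives
\[M\varphi(x)\le a+(1-a^2)\frac{M\psi(x)}{1-aM\psi(x)}.\]
This is $\le1$ as soon as $M\psi(x)\le 1/3$. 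Everything therefore reduces to the following Schwarz-type lemma for majorants: if $\|\psi\|_{\mathcal{M}_k}\le1$ and $\psi(x_0)=0$, then $M\psi(x)\le s(x)$.

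The main obstacle is this lemma. The pointwise inequality $|\psi(x)|\le s(x)$ is immediate from the $2\times2$ Pick matrix at $\{x,x_0\}$, but the majorant is harder. The naive bound $M\psi(x)\le\|\psi-\psi(x_0)\|_{\mathcal{H}_k}\sqrt{k(x,x)-1}$ only gives $\sqrt{1/8}$, which is too weak. My first attempt will use the complete Pick factorization (Leech / McCullough--Trent) adapted to the diagonal form of $\kappa$. Since $\psi(x_0)=0$, we can write
\[\psi=\sum_{\lambda\ne0}\sqrt{c_\lambda}\,e_\lambda g_\lambda\]
with $(g_\lambda)$ a contractive column multiplier. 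I will then write $M\psi(x)=\langle \psi, h_\theta\rangle$, where $h_\theta$ is the kernel $k(\cdot,x)$ with its coefficients twisted by unimodular constants. The goal is to move $M_{e_\lambda}^*$ onto $h_\theta$ and use the identity $k=1+\kappa k$ coefficientwise to bound the resulting column norm by $s(x)$. If the twisting destroys this structure, the fallback is to choose the twist so that $\theta_\lambda$ is a character of $\Lambda$, for which $\psi\mapsto\psi_\theta$ is an isometry of $\mathcal{M}_k$. In that case $M\psi(x)=|\psi_\theta(x)|\le s(x)$ follows directly from the pointwise Schwarz bound.
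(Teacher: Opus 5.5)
Your argument for the inclusion $\subseteq$ is essentially the paper's part (i). It uses the same initial multiplier $\kappa(\cdot,x)/s(x)$, the same Möbius amplification and the same sign argument. You do still need something like the bound $c_\lambda(m)\le b_\lambda$ of Lemma~\ref{lem:powers} to know that the formal product expansion of $\psi^{n+1}$ really is its expansion in $\mathcal{H}_k$.

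The inclusion $\supseteq$ has a genuine gap. Your reduction via submultiplicativity is fine. The problem is the Schwarz-type lemma it lands on, namely $M\psi(x)\le s(x)$ for every contractive $\psi$ with $\psi(x_0)=0$, which is false as stated. In $H^2$, take $\psi(z)=z(b-z)/(1-bz)$ with $0<b<1$. Then $M\psi(z)=\lvert z\rvert\bigl(b+(1-b^2)\lvert z\rvert/(1-b\lvert z\rvert)\bigr)$, and this exceeds $\lvert z\rvert$ whenever $\lvert z\rvert(1+2b)>1$, for instance at $\lvert z\rvert=1/2$, $b=9/10$. So in $H^2$ the lemma can only hold for $\lvert z\rvert\le1/3$. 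In that range, writing $\psi=zg$, it is equivalent to Bohr's theorem itself. Your reduction has therefore moved the whole difficulty into the lemma rather than resolved it.

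Neither of your routes can succeed, since each would prove the lemma at every $x$.

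\emph{The character twist.} The phases needed to make every $\overline{\theta_\lambda}\widehat\psi(\lambda)e_\lambda(x)\ge0$ are dictated by $\psi$ and are not multiplicative in $\lambda$. In $H^2$ the character twists are exactly the rotations $z\mapsto e^{it}z$. If one rotation could align all phases of $\widehat\psi(n)z^n$, the Bohr radius of $H^\infty$ would be $1$.

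\emph{The factorization route.} After Cauchy--Schwarz against $(g_\lambda)$, it yields at best $\|M_{\mathbf b}^*h_\theta\|$, where $M_{\mathbf b}$ is the row $(\sqrt{c_\lambda}M_{e_\lambda})_\lambda$. A unimodular twist does not change this norm, at least when $\Lambda$ is cancellative. In $H^2$ at the point $w$, for instance, $\|M_z^*h_\theta\|^2=\sum_{n\ge1}\lvert w\rvert^{2n}$ for every $\theta$. In general the norm is $\sqrt{\kappa(x,x)k(x,x)}=\sqrt{k(x,x)-1}$, which is exactly the naive bound you already discarded.

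What is missing is F.~Wiener's trick. One must bound each homogeneous part separately by $1-\lvert a\rvert^2$, using for each degree $m$ a different Möbius-transformed function. That function is built after averaging over a gauge action that kills all degrees not divisible by $m$. Passing through a single $\psi$ and submultiplicativity throws exactly this away.

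Since $\mathcal{H}_k$ carries no such grading, the paper moves the problem to $H^2_d$:
\begin{enumerate}
\item Lemma~\ref{lem:fwienerda} gives $\|\Phi_m\|_{H^2_d}\le1-\lvert\Phi(0)\rvert^2$ for contractive multipliers.
\item Cauchy--Schwarz on each block gives $\sum_{\lvert\gamma\rvert=m}\lvert a_\gamma z^\gamma\rvert\le\|\Phi_m\|_{H^2_d}\|z\|_{\ell^2}^m$, and hence Theorem~\ref{thm:da}.
\item Universality (Lemma~\ref{lem:universality}) with $\mathbf{b}_j=\sqrt{c_{\lambda_j}}e_{\lambda_j}$ writes $\varphi=\Phi\circ\mathbf{b}$. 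The diagonal form of $\kappa$ makes every monomial in $\mathbf{b}$ a nonnegative multiple of a single $e_\lambda$, which gives $M\varphi(x)\le M\Phi(\mathbf{b}(x))$.
\end{enumerate}
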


It should be noted that the conclusion of Theorem~\ref{thm:main} can be equivalently formulated via the reciprocal kernel $\kappa$, where it takes the more familiar form
\begin{equation} \label{eq:famform}
	\mathcal{B}(\mathcal{M}_k) = \left\{ x \in X \,:\, \sqrt{\kappa(x,x)} \leq \frac{1}{3} \right\}.
\end{equation}

It is clear that the Hardy space $H^2$ is an admissible reproducing kernel Hilbert space on $\mathbb{D}$ with $e_j(z) = z^j$ for $j=0,1,2,\ldots$. If $k$ is the Szeg\H{o} kernel, then the reciprocal kernel is
\[\kappa(z,w) = e_1(z) \overline{e_1(w)} = z \overline{w},\] 
and we recover the Bohr--Wiener result from \eqref{eq:famform} since $\sqrt{\kappa(z,z)}=\lvert z \rvert$.

Our next example is the Drury--Arveson space $H^2_d$, which will be of central importance in the proof of Theorem~\ref{thm:main}. Fix $d$ in $\mathbb{N} \cup \{\infty\}$ and let $\mathbb{B}_d$ be the unit ball of $\mathbb{C}^d$, where $\mathbb{C}^d$ is understood as $\ell^2$ when $d = \infty$. The Drury--Arveson space $H^2_d$ is the reproducing kernel Hilbert space on $\mathbb{B}_d$ with kernel
\[k_d(z,w) \coloneq \frac{1}{1 - \langle z,w \rangle_d},\]
where $\langle z,w \rangle_d$ is the standard inner product in $\mathbb{C}^d$ or in $\ell^2$. Notice that $H^2_1$ is just the classical Hardy space $H^2$.

To see that $H^2_d$ is admissible, we let $\Lambda$ be the countable abelian semigroup of finitely supported multi-indices $\gamma = (\gamma_j)_{j=1}^d$ of nonnegative integers under addition. The set of monomials $e_\gamma(z) = z^\gamma$ constitutes an orthogonal basis of $H^2_d$, and since $k_d(z,0)=1$, the point $x_0$ is the origin. Since $\kappa(z,w) = \langle z,w\rangle_d$, the space $H^2_d$ also has the diagonal complete Pick property.

Every $F$ in $H^2_d$ enjoys an absolutely convergent monomial expansion $F(z) = \sum_\gamma a_\gamma z^\gamma$ in $\mathbb{B}_d$, and the Bohr set is defined through the corresponding majorants $M\Phi(z) = \sum_\gamma \lvert a_\gamma z^\gamma \rvert$ for $\Phi$ in the multiplier algebra $\mult(H^2_d)$.

\begin{theorem} \label{thm:da}
	For every $d$ in $\mathbb{N} \cup \{\infty\}$ the Bohr set of $\mult(H^2_d)$ equals  
	\[\left\{ z \in \mathbb{B}_d \,:\, \|z\|_{\ell^2} \leq \frac{1}{3} \right\}.\]
\end{theorem}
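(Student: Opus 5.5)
Theorem~\ref{thm:da} follows by specializing Theorem~\ref{thm:main}, since $H^2_d$ is admissible with the diagonal complete Pick property and $\kappa(z,z)=\|z\|^2_{\ell^2}$. The paper says, however, that Theorem~\ref{thm:da} is central to the proof of Theorem~\ref{thm:main}, so I will prove it directly. There are two inclusions to establish.

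\emph{Sufficiency: if $\|z\|\le 1/3$ then $z$ lies in the Bohr set.} Let $\Phi$ be in $\mult(H^2_d)$ with $\|\Phi\|_{\mult}\le 1$, and write $\Phi=\sum_{n\ge0}P_n$ in homogeneous polynomials. Wiener's argument gives $|\widehat{\Phi}(0)|=|a_0|\le1$. For each $n\ge1$ I want the coefficient estimate
\[
\sum_{|\gamma|=n}|a_\gamma z^\gamma|\le (1-|a_0|^2)\|z\|^n .
\]

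To get it, I would first use the rotation $\Phi(e^{i\theta}z)$, which is again a contractive multiplier. Averaging against $e^{-in\theta}$ (Fej\'er-type means) shows that $a_0+P_n$ is a contractive multiplier, in the sense of the Wiener trick. Since $\mult(H^2_d)$ restricted to a complex line is contractive, I can do better on the majorant: choose unimodular $\omega_\gamma$ such that $\sum|a_\gamma z^\gamma|=\sum_\gamma a_\gamma (\omega z)^\gamma$-type expressions. Multiplying coordinates by unimodular constants is a unitary change, but it does not align the phases of all the monomials at once. The standard route is instead the Drury--Arveson estimate
\[
\Bigl(\sum_{|\gamma|=n}|a_\gamma|^2\tfrac{\gamma!}{n!}\Bigr)^{1/2}\le 1-|a_0|^2,
\]
which I would derive from the contractivity of $a_0+P_n$ by a M\"obius-type argument. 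The key tool is the Pick property: $\frac{1-\Phi(z)\overline{\Phi(w)}}{1-\langle z,w\rangle}$ is positive semidefinite. Composing with $\Phi\mapsto(\Phi-a_0)/(1-\overline{a_0}\Phi)$ preserves the contractive multipliers, and the resulting function vanishes at $0$, so it factors as $\sum z_jG_j$ with a contractive column. Combined with the $H^2_d$ norm bound this gives the estimate. Cauchy--Schwarz against the multinomial weights then gives
\[
\sum_{|\gamma|=n}|a_\gamma z^\gamma|\le \Bigl(\sum_{|\gamma|=n}|a_\gamma|^2\tfrac{\gamma!}{n!}\Bigr)^{1/2}\Bigl(\sum_{|\gamma|=n}\tfrac{n!}{\gamma!}|z^{2\gamma}|\Bigr)^{1/2}= \Bigl(\sum_{|\gamma|=n}|a_\gamma|^2\tfrac{\gamma!}{n!}\Bigr)^{1/2}\|z\|^n .
\]
Summing over $n$ at $\|z\|=r\le1/3$ yields
\[
M\Phi(z)\le |a_0|+(1-|a_0|^2)\frac{r}{1-r}\le |a_0|+\frac{1-|a_0|^2}{2}\le 1 .
\]

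\emph{Necessity: if $\|z\|>1/3$ then $z$ lies outside the Bohr set.} Let $u=z/\|z\|$ and take the one-variable extremals $\Phi(w)=\varphi_a(\langle w,u\rangle)$ with $\varphi_a(\zeta)=(a-\zeta)/(1-a\zeta)$. Composing with the linear functional $w\mapsto\langle w,u\rangle$ gives a contractive multiplier of $H^2_d$, since this functional is a row contraction. The majorant satisfies $M\Phi(z)\ge M\varphi_a(\|z\|)$, because expanding $\langle w,u\rangle^n$ into monomials only increases the sum by the triangle inequality. Wiener's example gives $M\varphi_a(r)>1$ for $r>1/3$ once $a$ is close to $1$.

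The main obstacle is the homogeneous coefficient bound with the multinomial weights, that is, the Drury--Arveson analogue of $|a_n|\le1-|a_0|^2$. I would try to obtain it first by restricting to slices $\zeta\mapsto\Phi(\zeta v)$ for unit vectors $v$, which gives $|P_n(v)|\le1-|a_0|^2$. This is weaker than what is needed, so the real proof should go through the Pick factorization above.
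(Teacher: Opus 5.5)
\textbf{Necessity.} This half is correct and is the paper's part (i) specialized to $H^2_d$. With $\xi=z$, the paper's initial multiplier $\kappa(\cdot,\xi)/\sqrt{\kappa(\xi,\xi)}$ is exactly your $\langle\cdot,u\rangle_d$. Your lower bound $M\Phi(z)\ge\sum_m\lvert\Phi_m(z)\rvert=a+(1-a^2)\sum_{m\ge1}a^{m-1}\|z\|_{\ell^2}^m$ is a valid shortcut for the no-cancellation content of Lemma~\ref{lem:powers}. In the sufficiency half, the Cauchy--Schwarz step with multinomial weights and the final summation also agree with the paper.

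\textbf{The gap.} You never establish the estimate $\|P_n\|_{H^2_d}\le 1-\lvert a_0\rvert^2$, which is the paper's Lemma~\ref{lem:fwienerda}. The premise you propose to derive it from is false: $a_0+P_n$ is in general \emph{not} a contractive multiplier. For instance, $\Phi(w)=\varphi_a(w_1^n)$ with $0<a<1$ is a contractive multiplier, and $a_0+P_n=a-(1-a^2)w_1^n$. Its supremum over $\mathbb{B}_d$, which is a lower bound for the multiplier norm, is $a+1-a^2>1$.

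Averaging $\Phi(e^{i\theta}z)$ against $e^{-in\theta}$ isolates $P_n$ alone. That only gives $\|P_n\|_{\mult(H^2_d)}\le 1$ and loses the factor $1-\lvert a_0\rvert^2$ entirely.

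The factorization $\Psi=\sum_j z_jG_j$ does not rescue this either. If you apply the M\"obius map to $\Phi$ itself, then in $\Phi=a_0-(1-\lvert a_0\rvert^2)\sum_{k\ge1}\overline{a_0}^{\,k-1}\Psi^k$ every power $\Psi^k$ with $2\le k\le n$ contributes to the $n$-homogeneous part, and nothing in your sketch controls these terms. The bound $\|\Phi\|_{H^2_d}\le1$ by itself yields only $\|P_n\|_{H^2_d}\le\sqrt{1-\lvert a_0\rvert^2}$. That is too weak to close the summation at $\|z\|_{\ell^2}=1/3$: take $\lvert a_0\rvert=2/\sqrt5$ to get $\sqrt5/2>1$.

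\textbf{The missing idea.} You need the actual F.~Wiener trick: average over the $n$th roots of unity rather than against $e^{-in\theta}$. With $\omega$ a primitive $n$th root of unity, $\frac1n\sum_{j=0}^{n-1}\Phi(\omega^j\,\cdot)=\sum_{k\ge0}P_{kn}$ is a contractive multiplier, by \eqref{eq:gauge} and the triangle inequality. It keeps both $a_0$ and $P_n$ but has no homogeneous parts in degrees not divisible by $n$.

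Your M\"obius argument then works verbatim on this averaged function. Assume $\lvert a_0\rvert<1$ and let $\Phi$ now denote the averaged function. By Lemma~\ref{lem:conformal}, $\Psi=(a_0-\Phi)/(1-\overline{a_0}\Phi)$ is a contractive multiplier that vanishes at $0$ and is invariant under $z\mapsto\omega z$. Hence each $\Psi^k$ has only homogeneous parts of degree at least $kn$. Only $k=1$ contributes in degree $n$, so $P_n=-(1-\lvert a_0\rvert^2)\Psi_n$, and
\[\|P_n\|_{H^2_d}\le(1-\lvert a_0\rvert^2)\|\Psi\|_{H^2_d}\le(1-\lvert a_0\rvert^2)\|\Psi\|_{\mult(H^2_d)}\le1-\lvert a_0\rvert^2.\]
No Pick factorization is needed. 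With this lemma in place, the rest of your sufficiency argument goes through as written.
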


Theorem~\ref{thm:da} is thus a special case of Theorem~\ref{thm:main}, and convention dictates that such a result be labeled as a corollary. We have not done so because the Drury--Arveson space is, rather, one of the main ingredients in the proof of Theorem~\ref{thm:main}, the relevant mechanism being its \emph{universality}. The proof of Theorem~\ref{thm:main} consists of two parts:

\begin{enumerate}
	\item[(i)] Show that if $k(x,x)>9/8$, then $x$ is not in $\mathcal{B}(\mathcal{M}_k)$. This will be handled by general arguments using the properties of $\kappa$.
	\item[(ii)] Show that if $k(x,x)\leq 9/8$, then $x$ is in $\mathcal{B}(\mathcal{M}_k)$. Here we will first establish the result for the Drury--Arveson space and use its universality (via $\kappa$) to extend the result to $\mathcal{H}_k$.
\end{enumerate}

The Drury--Arveson part of (ii) was first obtained by Paulsen, Popescu, and Singh~\cite{PPS2002}*{Example~5} for $d<\infty$. Their proof is different from the one to be given below, which hews closer to the original Bohr--Wiener approach. Our version of the latter is essentially due to Boas and Khavinson~\cite{BK1997}, who established that the Bohr set of $H^\infty(\mathbb{D}^d)$ contains $\{z \in \mathbb{D}^d\,:\, \|z\|_{\ell^2} \leq 1/3\}$. This should be compared with the Drury--Arveson setting presented in Theorem~\ref{thm:da}, where the novelty lies mainly in the reverse inclusion. In this context one should also recall that $H^2(\mathbb{D}^d)$ is not a complete Pick space unless $d=1$, so Theorem~\ref{thm:main} does not apply.

The assertion in (i) is new for the Drury--Arveson space when $d\geq2$. In fact, to the best of our knowledge, the full Bohr set of any space of functions of several variables has never been computed. 

Our next class of examples arises from power series. We choose again $e_j(z) = z^j$ for $j = 0,1,2,\ldots$, so that $\mathcal{H}_k$ has reproducing kernel
\[k(z,w) = \sum_{j=0}^\infty b_j (\overline{w}z)^j\]
for $b_0 = 1$ and $b_j > 0$. We want to compare different spaces of power series on the same set $X = \mathbb{D}$, and will do this by enforcing the requirement that the radius of convergence of the power series $\sum_{j \geq 0} b_j z^j$ be exactly $1$. We refer to such a space as an \emph{admissible space of power series}. The function $r \mapsto k(r,r)$ is then continuous and strictly increasing on $[0,1)$, with $k(0,0) = 1$.

Let $\varphi$ be in the multiplier algebra $\mathcal{M}_k$. The majorant $M\varphi(z)$ depends only on $\lvert z \rvert$, so the Bohr set of an admissible space of power series is a disc centered at the origin. We define the \emph{Bohr radius} of $\mathcal{M}_k$ as
\[\varrho(\mathcal{M}_k) \coloneq \sup\{\lvert z \rvert \,:\, z \in \mathcal{B}(\mathcal{M}_k)\}.\]

It follows from \cite{AM2002}*{Theorem~7.33} that for admissible spaces of power series the diagonal complete Pick property defined above is no stronger than the usual complete Pick property. The next result identifies the Bohr radius of every admissible space of power series with the complete Pick property, and shows that among these the Hardy space $H^2$ has the smallest one.

\begin{corollary} \label{cor:powerseries}
	Let $\mathcal{H}_k$ be an admissible space of power series with the complete Pick property. If there is a (necessarily unique) $1/3 \leq \varrho < 1$ such that $k(\varrho,\varrho) = 9/8$, then
	\[\mathcal{B}(\mathcal{M}_k) = \{ z \in \mathbb{D} \,:\, \lvert z \rvert \leq \varrho \}.\]
	Otherwise $\mathcal{B}(\mathcal{M}_k) = \mathbb{D}$. Moreover, $\varrho = 1/3$ if and only if $k$ is the Szeg\H{o} kernel.
\end{corollary}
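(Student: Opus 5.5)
The plan is to deduce everything from Theorem~\ref{thm:main}, with the earlier remark that, by \cite{AM2002}*{Theorem~7.33}, the complete Pick property of an admissible space of power series is the diagonal complete Pick property. Concretely, write $1/k(z,w) = 1 - \sum_{j\geq1} c_j (\overline{w}z)^j$ with $c_j \geq 0$. We also need $k\not\equiv1$, which holds because $b_j>0$ for all $j$. Theorem~\ref{thm:main} then gives
\[\mathcal{B}(\mathcal{M}_k)=\{z\in\mathbb{D}: k(z,z)\le 9/8\}.\]
Since $k(z,z)=\sum_j b_j\lvert z\rvert^{2j}$, this set depends only on $r=\lvert z\rvert$. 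The function $r\mapsto k(r,r)$ is continuous and strictly increasing on $[0,1)$ with value $1$ at $0$.

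Two cases follow from this monotonicity. If $k(r,r)\to L>9/8$ as $r\to1^-$ (possibly $L=\infty$), there is a unique $\varrho$ with $k(\varrho,\varrho)=9/8$, and the Bohr set is the closed disc of radius $\varrho$. Otherwise $k(r,r)\le 9/8$ for every $r<1$, since a strictly increasing function stays below its limit, and the Bohr set is all of $\mathbb{D}$. When the limit equals exactly $9/8$, no $\varrho<1$ solves the equation, so this case falls under ``otherwise'', as the statement requires.

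It remains to prove $\varrho\ge 1/3$ and to characterize equality. Use the reciprocal form \eqref{eq:famform}: $k(r,r)\le 9/8$ if and only if $\kappa(r,r)=\sum_{j\ge1}c_j r^{2j}\le 1/9$. Comparing the coefficient of $\overline{w}z$ in $k=1+\kappa k$ gives $b_1=c_1$. Expanding $1/k$ then gives $c_2=b_2-b_1^2$, and in general the $c_j$ are determined recursively by the $b_j$.

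The key point is a normalization: the majorant inequality is homogeneous in $\varphi$, so the Bohr set involves $\lvert\widehat\varphi(j)\rvert\,\lvert z\rvert^j$ with $e_j=z^j$ fixed. Hence $b_1$ cannot be rescaled away, and the claim $\varrho\ge1/3$ must come from a coefficient inequality. The natural candidate is that $\sum_j c_j\le 1$, obtained by letting $r\to1$ in $\kappa(r,r)=1-1/k(r,r)<1$. Then $\kappa(1/3,1/3)=\sum_j c_j 9^{-j}\le \tfrac19\sum_j c_j\le\tfrac19$, which gives $\varrho\ge1/3$.

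For the equality case, suppose $\varrho=1/3$. Then $\sum_j c_j9^{-j}=1/9$. Combined with $\sum_j c_j\le 1$, this forces $c_j=0$ for all $j\ge2$ and $c_1=1$. Hence $\kappa(z,w)=\overline{w}z$ and $k$ is the Szeg\H{o} kernel. The converse is the Bohr--Wiener case already discussed.

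The main obstacle is justifying $\sum_j c_j\le1$, together with the strict-inequality bookkeeping in the equality case. Since $c_j\ge0$, monotone convergence gives $\sum_j c_j=\lim_{r\to1^-}\kappa(r,r)\le1$, which suffices.
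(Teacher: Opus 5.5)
Your proposal is correct and follows essentially the same route as the paper: you apply Theorem~\ref{thm:main} via the diagonal complete Pick property, use the strict monotonicity of $r\mapsto k(r,r)$ to get the dichotomy, and derive both $\varrho\ge 1/3$ and the Szeg\H{o} equality case from $\sum_j c_j\le 1$ (by monotone convergence) together with $\kappa(r,r)\le r^2\sum_j c_j$. Your remarks about $b_1=c_1$ and about normalization are harmless but not needed for the argument.
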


Let us consider two classical scales of admissible spaces of power series.

\begin{example}[Binomial weights] \label{ex:binom}
	For $0<\alpha<\infty$, let $A^2_\alpha$ be the reproducing kernel Hilbert space on $\mathbb{D}$ with reproducing kernel
	\[k_\alpha(z,w) = \frac{1}{(1-\overline{w}z)^\alpha}.\]
	Each $A^2_\alpha$ is plainly an admissible space of power series, but the binomial theorem shows that $k_\alpha$ has the diagonal complete Pick property if and only if $0<\alpha \leq 1$. If $\alpha>1$, then $A^2_\alpha$ is a Bergman-type space and $\mult(A^2_\alpha) = H^\infty$. Thus we have by Corollary~\ref{cor:powerseries} and the Bohr--Wiener theorem that
	\[\varrho_\alpha = \begin{cases}
		\sqrt{1-(8/9)^{1/\alpha}}, & \text{if } 0 < \alpha \leq 1; \\
		1/3, & \text{if } 1 \leq \alpha < \infty.
	\end{cases}\]
\end{example}

\begin{example}[Power weights]
	For $0 < \beta < \infty$, consider the admissible space of power series with reproducing kernel
	\[k_\beta(z,w) = \sum_{n=0}^\infty (n+1)^{-\beta} (\overline{w}z)^n,\]
	which has the diagonal complete Pick property, since $b_n = (n+1)^{-\beta}$ are logarithmically convex and Kaluza's lemma~\cite{AM2002}*{Lemma~7.38} then gives $c_n \geq 0$ for every $n$. Since $k_\beta(r,r)$ increases to $\sum_{n\geq1} n^{-\beta}$ as $r \to 1^-$, we are in the second case of Corollary~\ref{cor:powerseries} precisely when $\beta>1$ and $\zeta(\beta) \leq 9/8$. Hence, if $\beta_0 = 3.51545\ldots$ denotes the solution of $\zeta(\beta_0)=9/8$, then $\mathcal{B}(\mathcal{M}_{k_\beta}) = \mathbb{D}$ for $\beta \geq \beta_0$, while for $0<\beta<\beta_0$ the Bohr radius $\varrho_\beta$ is the solution of $k_\beta(r,r)=9/8$ in $(1/3,1)$.
\end{example}

Our final class of examples arises from ordinary Dirichlet series, following McCarthy and Shalit~\cite{MS2017}. Let $\mathcal{N}\neq\{1\}$ be a sub-semigroup of $(\mathbb{N},\cdot)$ containing $1$, and choose $\Lambda = \{\log n \,:\, n \in \mathcal{N}\}$ and $e_{\log n}(s) = n^{-s}$. The reproducing kernel of $\mathcal{H}_k$ is then
\[k(s,w) = \sum_{n \in \mathcal{N}} b_n n^{-s-\overline{w}}\]
with $b_1 = 1$ and $b_n > 0$ for every $n$ in $\mathcal{N}$. It follows that $\mathcal{H}_k$ consists precisely of those Dirichlet series $f(s) = \sum_{n \in \mathcal{N}} \widehat{f}(n) n^{-s}$ for which $\sum_{n \in \mathcal{N}} \lvert \widehat{f}(n) \rvert^2 /b_n$ is finite. Consequently, every $f$ in $\mathcal{H}_k$ converges wherever $k$ does, and
\[\sigmaa \coloneq \inf\left\{ \sigma \in \mathbb{R} \,:\, \sum_{n \in \mathcal{N}} b_n n^{-2\sigma} < \infty \right\}\]
is such that $\mathbb{C}_{\sigmaa} \coloneq \{s \,:\, \mre{s} > \sigmaa\}$ is the largest half-plane of convergence for $\mathcal{H}_k$. We take $X$ to be the \emph{extended} half-plane $\mathbb{C}_{\sigmaa} \cup \{+\infty\}$ and refer to $\mathcal{H}_k$ as an \emph{admissible space of Dirichlet series}. Notice that the point at $+\infty$ plays the role of $x_0$, and we adopt the convention $\mre{+\infty}=+\infty$.

Since the majorant $M\varphi(s)$ for $\varphi$ in the multiplier algebra $\mathcal{M}_k$ depends only on $\mre{s}$, the Bohr set of an admissible space of Dirichlet series is an extended half-plane contained in $X$. We define the \emph{Bohr abscissa}\footnote{This is the ``isometric Bohr abscissa'' of Balasubramanian, Calado, and Queff\'elec~\cite{BCQ2006}.} of $\mathcal{M}_k$ as
\[\varsigma(\mathcal{M}_k) \coloneq \inf\{\mre{s} \,:\, s \in \mathcal{B}(\mathcal{M}_k)\}.\]

For admissible spaces of Dirichlet series, the diagonal complete Pick property is again no stronger than the complete Pick property, this time by~\cite{MS2017}*{Theorem~26}. Moreover, if $\mathcal{H}_k$ has the complete Pick property, then $\sigmaa > -\infty$ by~\cite{MS2017}*{Theorem~28}, so that $\mathbb{C}_{\sigmaa}$ is a genuine half-plane. A change of variables would allow us to normalize $\sigmaa = 0$, as in~\cite{MS2017}, but we prefer to keep track of $\sigmaa$ explicitly. The next result identifies the Bohr abscissa of every admissible space of Dirichlet series with the complete Pick property, and identifies the kernel with the largest Bohr abscissa relative to $\sigmaa$.

\begin{corollary} \label{cor:diriseries}
	Suppose that $\mathcal{H}_k$ is an admissible space of Dirichlet series that enjoys the complete Pick property. If there is a (necessarily unique) $\sigmaa < \varsigma \leq \sigmaa + \log 3/\log 2$ such that $k(\varsigma,\varsigma) = 9/8$, then
	\[\mathcal{B}(\mathcal{M}_k) = \{ s \in \mathbb{C}_{\sigmaa} \cup \{+\infty\} \,:\, \mre{s} \geq \varsigma \}.\]
	Otherwise $\mathcal{B}(\mathcal{M}_k) = \mathbb{C}_{\sigmaa} \cup \{+\infty\}$. Moreover, $\varsigma = \sigmaa + \log 3/\log 2$ if and only if
	\[k(s,w) = \frac{1}{1 - 2^{2\sigmaa-s-\overline{w}}}.\]
\end{corollary}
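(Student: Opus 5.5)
This follows from Theorem~\ref{thm:main} once the kernel is rewritten along the real axis. By \cite{MS2017}*{Theorem~26} the complete Pick property gives the diagonal complete Pick property, so $\mathcal{B}(\mathcal{M}_k) = \{s : k(s,s) \leq 9/8\}$, with the convention $k(+\infty,+\infty)=1$. Since $k(s,s) = \sum_{n\in\mathcal{N}} b_n n^{-2\mre s}$ depends only on $\sigma = \mre s$, I will study $g(\sigma) = k(\sigma,\sigma)$ on $(\sigmaa,\infty)$.

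\textbf{Monotonicity of $g$.} Because $\mathcal{N} \neq \{1\}$, every term with $n \geq 2$ has $b_n>0$ and is strictly decreasing in $\sigma$. Hence $g$ is continuous and strictly decreasing, with $g(\sigma) \to 1$ as $\sigma \to \infty$. Either $g$ takes the value $9/8$ at a unique $\varsigma$, and then the Bohr set is $\{\mre s \geq \varsigma\} \cup \{+\infty\}$, or $g < 9/8$ throughout, and then the Bohr set is all of $X$. (Note that $g \to \infty$ at $\sigmaa$ need not hold.) This settles everything except the bound on $\varsigma$ and the extremal case.

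\textbf{Bound $\varsigma \leq \sigmaa + \log 3/\log 2$.} I will work with the reciprocal kernel, writing $\kappa(\sigma,\sigma) = \sum_{n \geq 2} c_n n^{-2\sigma}$ with $c_n \geq 0$. The condition $g(\varsigma) = 9/8$ is equivalent to $\kappa(\varsigma,\varsigma) = 1/9$. The key observation is that $\kappa(\sigma,\sigma) \to 1$ as $\sigma \to \sigmaa^+$. Indeed, $g(\sigma) = \sum_j \kappa^j$ is a series with nonnegative coefficients, so $\sigmaa$ is also the abscissa where $\kappa(\sigma,\sigma)$ first reaches $1$. More precisely, $\kappa(\sigma,\sigma)$ is a Dirichlet series with nonnegative coefficients, and $k = 1/(1-\kappa)$ can only diverge where $\kappa \geq 1$ or where $\kappa$ itself diverges.

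I expect the main obstacle to be ruling out the case where $\kappa$ diverges before reaching $1$. I would handle it with Pringsheim/Landau-type reasoning: at the abscissa of convergence of $\kappa$, the function $1/(1-\kappa)$ would have to stay finite and analytic, which contradicts the singularity of $k$ at $\sigmaa$. Landau's theorem says a Dirichlet series with nonnegative coefficients has a singularity at its real abscissa, and this applies to $k$. So, by continuity, $\kappa(\sigma,\sigma)\uparrow 1$ as $\sigma\downarrow\sigmaa$, or at least $\limsup \geq 1$.

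Given this, every term in $\kappa$ has $n \geq 2$, so for $\sigma > \sigmaa$
\[
\kappa(\sigma,\sigma) = \sum_{n\geq2} c_n n^{-2\sigmaa} n^{-2(\sigma-\sigmaa)} \leq 2^{-2(\sigma-\sigmaa)} \sum_{n\geq 2} c_n n^{-2\sigmaa} \leq 2^{-2(\sigma-\sigmaa)},
\]
where the final sum is at most $1$ by monotone convergence. At $\sigma = \sigmaa + \log 3/\log 2$ this gives $\kappa \leq 1/9$, hence $\varsigma \leq \sigmaa + \log3/\log2$. Also $\varsigma > \sigmaa$, since $\kappa < 1/9$ would otherwise persist all the way to $\sigmaa$.

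\textbf{Equality case.} Equality at $\sigma_1 = \sigmaa + \log 3/\log 2$ forces
\[
\sum_{n>2} c_n n^{-2\sigmaa}\bigl(n^{-2(\sigma_1-\sigmaa)} - 2^{-2(\sigma_1-\sigmaa)}\bigr) = 0
\qquad\text{and}\qquad
\sum_{n\geq 2} c_n n^{-2\sigmaa} = 1.
\]
Together these give $c_n = 0$ for $n > 2$ and $c_2 = 2^{2\sigmaa}$. Hence
\[
\kappa(s,w) = 2^{2\sigmaa - s - \overline{w}}, \qquad k = \frac{1}{1-\kappa},
\]
which is the stated kernel. Conversely, for this kernel a direct computation gives $\varsigma = \sigmaa + \log3/\log2$.
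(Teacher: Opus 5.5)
Your main line is the argument the paper intends; it omits this proof and says it runs like that of Corollary~\ref{cor:powerseries}. Theorem~\ref{thm:main} plus strict monotonicity of $k(\sigma,\sigma)$ gives the dichotomy. Monotone convergence from $\kappa(\sigma,\sigma)<1$ on $(\sigmaa,\infty)$ gives $\sum_{n\geq2}c_nn^{-2\sigmaa}\leq1$, hence $\kappa(\sigma,\sigma)\leq 2^{-2(\sigma-\sigmaa)}$. The equality case comes from the two inequalities, as in \eqref{eq:universaldisc}. That part is correct.

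However, your ``key observation'' that $\kappa(\sigma,\sigma)\to1$ as $\sigma\to\sigmaa^+$ is false. On $(\sigmaa,\infty)$ it is equivalent to $k(\sigma,\sigma)\to\infty$, which you had just correctly said need not hold. For a counterexample, transplant the power-weight example via $z=2^{-s}$: take $\mathcal{N}=\{2^j\,:\,j\geq0\}$ and $b_{2^j}=(j+1)^{-\beta}$ with $\beta>1$. Kaluza's lemma gives $c_n\geq0$, $\sigmaa=0$, and $\kappa(\sigma,\sigma)\to1-1/\zeta(\beta)<1$ as $\sigma\to0^+$. If moreover $\zeta(\beta)\leq9/8$, no $\varsigma$ exists at all, so the ``otherwise'' alternative genuinely occurs.

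Your Landau argument breaks because $\kappa$ also has nonnegative coefficients. So $\kappa$ is itself singular at its real abscissa, and $1/(1-\kappa)$ need not be analytic there even though $\kappa<1$. In the example, $k$ and $\kappa$ have the same abscissa and both converge there.

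Nothing in your proof actually rests on this claim. Your chain of inequalities uses only $\sum_{n\geq2}c_nn^{-2\sigmaa}\leq1$, which you correctly obtain by monotone convergence. Also, $\varsigma>\sigmaa$ is automatic because $\varsigma$ must be a point of $X=\mathbb{C}_{\sigmaa}\cup\{+\infty\}$. Your justification of it (``$\kappa<1/9$ would otherwise persist all the way to $\sigmaa$'') wrongly suggests that $\varsigma$ always exists. Remove the paragraph containing the key observation and that sentence, and the proof is complete.
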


Let us look at two examples where Corollary~\ref{cor:diriseries} applies.

\begin{example}[McCarthy's space~\cite{McCarthy2004}]
	Let $\mathcal{H}_k$ be the admissible space of Dirichlet series with kernel
	\[k(s,w) = \frac{1}{2-\zeta(s+\overline{w})},\]
	whose coefficients $b_n$ count the ordered factorizations of $n$. The abscissa $\sigmaa$ is the unique solution of $\zeta(2\sigma) = 2$, and $\mathcal{H}_k$ has the complete Pick property, since $\kappa(s,w) = \zeta(s+\overline{w})-1$. It follows by Corollary~\ref{cor:diriseries} that the Bohr abscissa $\varsigma(\mathcal{M}_k)$ is the unique solution of $\zeta(2\sigma) = 10/9$. Numerically,
	\[\sigmaa = 0.86432\ldots \qquad\text{and}\qquad \varsigma(\mathcal{M}_k) = 1.82451\ldots,\]
	so that the difference $\varsigma(\mathcal{M}_k) - \sigmaa = 0.96018\ldots$ lies comfortably below the universal bound $\log 3/\log 2 = 1.58496\ldots$ from Corollary~\ref{cor:diriseries}.
\end{example}

\begin{example}[A Dirichlet series realization of $H^2_\infty$]
	Let $P(s) = \sum_p p^{-s}$ denote the prime zeta function and consider the kernel
	\[k(s,w) = \frac{1}{1-P(s+\overline{w})},\]
	so that $\kappa(s,w) = P(s+\overline{w})$ and $\mathcal{N} = \mathbb{N}$. Thus $\mathcal{H}_k$ is an admissible space of Dirichlet series with the complete Pick property, and its abscissa $\sigmaa$ and Bohr abscissa $\varsigma(\mathcal{M}_k)$ are, respectively, the solutions of $P(2\sigma)=1$ and $P(2\sigma)=1/9$, numerically
	\[\sigmaa = 0.69971\ldots \qquad\text{and}\qquad \varsigma(\mathcal{M}_k) = 1.77072\ldots.\]
	It is demonstrated in \cite{MS2017}*{Theorem~41} that $\mathcal{H}_k$ is weakly isomorphic to $H^2_\infty$ and $\mathcal{M}_k$ is unitarily equivalent to $\mult(H^2_\infty)$.
\end{example}

\subsection*{Organization} This paper comprises three additional sections. In Section~\ref{sec:pdk} we prove part (i) of Theorem~\ref{thm:main}. Section~\ref{sec:da} concerns the proof of part (ii) and goes via the Drury--Arveson space and Theorem~\ref{thm:da}. In both sections we begin by recalling the Bohr--Wiener proof in the classical setting and explain how it extends to the general setting. The final Section~\ref{sec:fj} contains the proofs of Corollary~\ref{cor:powerseries} and  Corollary~\ref{cor:diriseries}.

\subsection*{AI disclosure} In the preparation of this paper we have been assisted by Claude Fable 5, who played the role of an expert in the theory of complete Pick spaces. We were initially considering only the spaces of Example~\ref{ex:binom} and had found the main ideas from Section~\ref{sec:pdk} and a coarse version of the argument in Section~\ref{sec:da} that circumvented the Drury--Arveson space. Claude was presented with our work and \cites{AM2002,Hartz2023} with the suggestion to use the complete Pick property of the kernel to tighten our upper and lower bounds on $\varrho_\alpha$. After a few iterations, the result stated in Example~\ref{ex:binom} was obtained. The abstract generalization and the application to Dirichlet series were identified by the authors in collaboration with Claude. The authors wrote the final version of all arguments and take complete responsibility for the content of the paper.

\section{Positive semi-definite kernels} \label{sec:pdk}
We motivate our approach by going over the Bohr--Wiener proof that any $z$ in $\mathbb{D}$ with $\lvert z \rvert >1/3$ cannot belong to the Bohr set of $H^\infty$. The idea is that the conformal map
\begin{equation} \label{eq:confmap}
	\varphi_a(z) \coloneq \frac{a-z}{1-\overline{a}z}
\end{equation}
satisfies $\|\varphi_a\|_{H^\infty} = 1$ for every $a$ in $\mathbb{D}$, but expanding
\begin{equation} \label{eq:classicalexp}
	\varphi_a(z) = a - (1-\lvert a \rvert^2) \sum_{m=1}^\infty \overline{a}^{\,m-1} z^m,
\end{equation}
we find that
\[M \varphi_a(z) = \lvert a \rvert + \frac{(1-\lvert a \rvert^2)\lvert z \rvert}{1-\lvert a \rvert \lvert z \rvert}.\]
It follows that
\[M \varphi_a(z) \leq \|\varphi_a\|_{H^\infty} \qquad \iff \qquad \lvert z \rvert \leq \frac{1}{1+2\lvert a \rvert},\]
so letting $\lvert a\rvert \to 1^-$, we obtain the desired conclusion.

The key to generalizing the example \eqref{eq:confmap} is to understand that it is actually constructed from three ingredients. The first is an initial multiplier $\varphi$ of norm $\leq 1$, which in this case is $\varphi(z)=z$. The second is a way to create a family of multipliers $\varphi_a$ from the initial multiplier. In the case of $H^\infty$, it is obvious that if the norm of $\varphi$ is $\leq 1$, then so is the norm of
\[\varphi_a(z) = \frac{a-\varphi(z)}{1-\overline{a}\varphi(z)}.\]
The purpose of the second ingredient is amplification. The initial multiplier $\varphi(z)=z$ carries no information by itself, since $M\varphi(z)=\lvert z\rvert$ does not exceed $1$, whereas letting $\lvert a \rvert \to 1^-$ in the family, we turn this trivial bound into the sharp requirement $\lvert z \rvert \leq 1/3$. The third ingredient is that no cancellation occurs when the majorant of $\varphi_a$ is formed, which for $H^\infty$ may be read off from \eqref{eq:classicalexp}.

In order to generalize the first two of these steps, we must be able to estimate norms in reproducing kernel Hilbert spaces and in their multiplier algebras. The unit balls of $\mathcal{M}_k$ and of $\mathcal{H}_k$ are both described by positivity. A kernel $K \colon X \times X \to \mathbb{C}$ is called \emph{positive semi-definite} if
\[\sum_{m,n=1}^N c_m \overline{c_n} K(x_m,x_n) \geq 0 \]
for every finite collection of points $x_1,x_2,\ldots,x_N$ in $X$ and all complex scalars $c_1,c_2,\ldots,c_N$. The first description is well-known (see \cite{AM2002}*{Corollary~2.37} or \cite{Hartz2023}*{Proposition~10.4.3}).

\begin{lemma} \label{lem:mnormest}
	Suppose that $\mathcal{H}_k$ is a reproducing kernel Hilbert space on $X$. A function $\varphi$ is in the unit ball of $\mathcal{M}_k$ if and only if
	\[K(x,y) = \left(1-\varphi(x)\overline{\varphi(y)}\right) k(x,y)\]
	is positive semi-definite.
\end{lemma}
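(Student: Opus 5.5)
The plan is to prove both directions of Lemma~\ref{lem:mnormest} by testing the multiplication operator and its adjoint against reproducing kernels. Write $k_y = k(\cdot,y)$, so that $\langle f, k_y\rangle = f(y)$ for every $f$ in $\mathcal{H}_k$. The key identity is that, whenever $M_\varphi$ is bounded, $M_\varphi^* k_y = \overline{\varphi(y)}\, k_y$. Indeed, for every $f$ we have
\[\langle f, M_\varphi^* k_y\rangle = \langle \varphi f, k_y \rangle = \varphi(y) f(y) = \langle f, \overline{\varphi(y)} k_y\rangle.\]

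For the forward direction, suppose $\|M_\varphi\| \leq 1$. Then $I - M_\varphi M_\varphi^* \geq 0$. Take points $x_1,\ldots,x_N$, scalars $c_1,\ldots,c_N$, and set $g = \sum_n c_n k_{x_n}$. Using the identity above, we get
\[0 \leq \|g\|^2 - \|M_\varphi^* g\|^2 = \sum_{m,n=1}^N c_m \overline{c_n}\,\bigl(1 - \varphi(x_m)\overline{\varphi(x_n)}\bigr) k(x_m,x_n),\]
after choosing conjugations so that the indices match the convention in the definition of positive semi-definiteness. This gives positivity of $K$.

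For the converse, suppose $K$ is positive semi-definite. Define an operator $T$ on the linear span of the kernel functions by
\[T k_y = \overline{\varphi(y)}\, k_y.\]
Positivity of $K$ says exactly that $\|Tg\| \leq \|g\|$ for every finite combination $g = \sum_n c_n k_{x_n}$.
\begin{itemize}
	\item $T$ is well defined: if such a combination $g$ vanishes, the norm bound forces $Tg = 0$.
	\item Since the kernel functions span a dense subspace, $T$ extends to a contraction on all of $\mathcal{H}_k$.
	\item For every $f$ in $\mathcal{H}_k$,
	\[(T^* f)(y) = \langle T^* f, k_y\rangle = \langle f, T k_y\rangle = \varphi(y) f(y).\]
\end{itemize}
Hence $\varphi f = T^* f$ lies in $\mathcal{H}_k$, so $M_\varphi = T^*$ is bounded with $\|M_\varphi\| = \|T\| \leq 1$.

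The only delicate step is the well-definedness and density argument in the converse. It is handled by the norm inequality together with the standard fact that the $k_y$ span a dense subspace of $\mathcal{H}_k$, since any $f$ orthogonal to all of them vanishes identically. Everything else is bookkeeping with conjugations in the quadratic form.
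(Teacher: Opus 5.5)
Your proof is correct: the identity $M_\varphi^* k_y = \overline{\varphi(y)}\,k_y$ gives the forward direction through $\|g\|^2-\|M_\varphi^* g\|^2\geq 0$. For the converse, defining the contraction $T$ on the span of the kernel functions and identifying $T^*=M_\varphi$ does the job. The paper does not prove this lemma itself but cites it from Agler--McCarthy and Hartz, where this same kernel-function and adjoint argument is the standard proof.
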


The description of the unit ball of $\mathcal{H}_k$ follows from Aronszajn's comparison theorems (see~\cite{Aronszajn1950}*{I.\S7}) in which the smaller of the two spaces is the one-dimensional space with reproducing kernel $(x,y) \mapsto f(x)\overline{f(y)}$.

\begin{lemma} \label{lem:domination}
	Suppose that $\mathcal{H}_k$ is a reproducing kernel Hilbert space on $X$. A function $f$ on $X$ belongs to the unit ball of $\mathcal{H}_k$ if and only if the kernel
	\[K(x,y) = k(x,y) - f(x)\overline{f(y)}\]
	is positive semi-definite.
\end{lemma}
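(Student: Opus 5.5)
We prove Lemma~\ref{lem:domination} directly from the reproducing property and the Riesz representation theorem, following Aronszajn's comparison argument. Throughout, write $k_y = k(\cdot,y)$, so that $g(y) = \langle g, k_y\rangle_{\mathcal{H}_k}$ for every $g$ in $\mathcal{H}_k$ and $\langle k_y,k_x\rangle = k(x,y)$.

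\emph{Necessity.} Suppose $f \in \mathcal{H}_k$ with $\|f\|_{\mathcal{H}_k} \leq 1$, and take points $x_1,\ldots,x_N$ and scalars $c_1,\ldots,c_N$. Set $g = \sum_n \overline{c_n}\, k_{x_n}$. Then
\[\sum_{m,n} c_m \overline{c_n} k(x_m,x_n) = \|g\|^2_{\mathcal{H}_k}.\]
On the other hand,
\[\sum_{m,n} c_m\overline{c_n} f(x_m)\overline{f(x_n)} = \Bigl\lvert \sum_m c_m f(x_m)\Bigr\rvert^2 = \lvert\langle f, g\rangle\rvert^2 \leq \|g\|^2,\]
where the last step is the Cauchy--Schwarz inequality together with $\|f\| \leq 1$. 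Subtracting the two displays shows that $K$ is positive semi-definite.

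\emph{Sufficiency.} Assume $K = k - f\otimes\overline{f}$ is positive semi-definite. Let $\mathcal{D}$ be the dense subspace of finite linear combinations $g = \sum_n a_n k_{x_n}$, and define a functional on $\mathcal{D}$ by
\[L(g) = \sum_n a_n f(x_n).\]
Positivity of $K$, applied with $c_n = a_n$, gives
\[\Bigl\lvert\sum_n a_n f(x_n)\Bigr\rvert^2 \leq \sum_{m,n} a_m \overline{a_n}\, k(x_n,x_m) = \|g\|^2.\]
(One should check carefully here that the conjugation conventions match; if they do not, apply the positivity with conjugated scalars instead.) This estimate is the key step, because it also shows that $L$ is well defined: if $g = 0$, then $L(g) = 0$, so $L$ does not depend on the representation of $g$.

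Hence $L$ is bounded with $\|L\| \leq 1$ and extends to all of $\mathcal{H}_k$. By the Riesz representation theorem there is $h$ in $\mathcal{H}_k$ with $\|h\| \leq 1$ and $L(g) = \langle g, h\rangle$ for all $g$. Taking $g = k_y$ gives
\[f(y) = L(k_y) = \langle k_y, h\rangle = \overline{h(y)}.\]
To finish, one should instead define $L(g) = \sum_n \overline{a_n}\, f(x_n)$, which is conjugate-linear. Running the same argument then yields $f(y) = \langle h, k_y\rangle = h(y)$, so $f = h$ lies in the unit ball of $\mathcal{H}_k$.

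The main obstacle is precisely this bookkeeping of conjugates: $L$ must be chosen conjugate-linear so that its Riesz representative is $f$ itself rather than $\overline{f}$. The well-definedness of $L$ is the only other nontrivial point, and it follows from the displayed estimate.
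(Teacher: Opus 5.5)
Your argument is correct once you commit to the conjugate-linear functional. It differs from the paper, which gives no proof and instead cites Aronszajn's inclusion theorem: $\mathcal{H}_{k_1}$ is contractively contained in $\mathcal{H}_k$ if and only if $k-k_1$ is positive semi-definite. The paper applies this to the rank-one kernel $k_1(x,y)=f(x)\overline{f(y)}$, whose space is $\mathbb{C}f$ with $\|f\|_{\mathcal{H}_{k_1}}=1$. Your Riesz-representation argument is essentially the standard proof of that theorem specialized to rank one. It buys self-containedness at the cost of a few lines; the citation buys brevity and presents the lemma as an instance of the general comparison criterion. Your necessity direction is fine as written.

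In the sufficiency direction you should delete the first, linear pass rather than treat it as a question of conventions, because the displayed estimate there is false in general. Positivity of $K$ with $c_n=a_n$ bounds $\lvert\sum_n a_nf(x_n)\rvert^2$ by $\sum_{m,n}a_m\overline{a_n}k(x_m,x_n)=\|\sum_n\overline{a_n}k_{x_n}\|^2$. It does not bound it by $\|g\|^2=\sum_{m,n}a_m\overline{a_n}k(x_n,x_m)$. If the linear $L$ were bounded, its Riesz representative $h$ would give $f=\overline{h}$, i.e. $\overline{f}\in\mathcal{H}_k$. For example, this fails for $f(z)=z/2$ in $H^2$, even though $K$ is positive semi-definite there.

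So the two conjugation issues you flag are one and the same, and the choice $L(g)=\sum_n\overline{a_n}f(x_n)$ resolves both. Positivity applied with $c_n=\overline{a_n}$ gives exactly $\lvert L(g)\rvert^2\leq\sum_{m,n}\overline{a_m}a_nk(x_m,x_n)=\|g\|^2$. This yields well-definedness and $\|L\|\leq1$. The Riesz theorem applied to the linear functional $\overline{L}$ then produces $h$ with $\|h\|\leq1$ and $L(g)=\langle h,g\rangle$. Finally $f(y)=L(k_y)=h(y)$.
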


We shall also need to know that the pointwise product of two positive semi-definite kernels is again positive semi-definite. This is a well-known consequence of the Schur product theorem (see \cite{AM2002}*{Theorem~A.1}).

We next turn to the adaptation of the second ingredient, which is a general fact concerning reproducing kernel Hilbert spaces. Note that it requires nothing of $\mathcal{H}_k$ beyond $k(x,x)>0$, which holds in any admissible space since $k(x,x)\geq k(x_0,x_0) = 1$.

\begin{lemma} \label{lem:conformal}
	Let $\mathcal{H}_k$ be a reproducing kernel Hilbert space on $X$ such that $k(x,x)>0$ for every $x$ in $X$. If $\|\varphi\|_{\mathcal{M}_k} \leq 1$, then
  	\[\varphi_a(x) = \frac{a-\varphi(x)}{1-\overline{a} \varphi(x)}\]
	also satisfies $\|\varphi_a\|_{\mathcal{M}_k}\leq 1$ for any $a$ in $\mathbb{D}$.
\end{lemma}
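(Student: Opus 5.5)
The plan is to verify the criterion of Lemma~\ref{lem:mnormest} directly for $\varphi_a$, via the classical identity
\[1-\varphi_a(x)\overline{\varphi_a(y)} = \frac{(1-\lvert a\rvert^2)\bigl(1-\varphi(x)\overline{\varphi(y)}\bigr)}{(1-\overline{a}\varphi(x))(1-a\overline{\varphi(y)})}.\]
This identity is a routine algebraic check. Multiplying by $k(x,y)$, we need the kernel
\[K_a(x,y)=(1-\lvert a\rvert^2)\, g(x)\overline{g(y)}\,\bigl(1-\varphi(x)\overline{\varphi(y)}\bigr)k(x,y), \qquad g(x)=\frac{1}{1-\overline{a}\varphi(x)},\]
to be positive semi-definite.

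Two preliminary facts are needed. First, $\varphi_a$ must be well defined, so I show $\lvert\varphi(x)\rvert\le 1$ for every $x$. This follows from Lemma~\ref{lem:mnormest} applied with $N=1$: positivity of $K(x,x)=(1-\lvert\varphi(x)\rvert^2)k(x,x)\ge 0$ together with $k(x,x)>0$ gives $\lvert\varphi(x)\rvert\le1$. This is exactly where the hypothesis $k(x,x)>0$ enters. Since $\lvert a\rvert<1$, it follows that $\lvert\overline a\varphi(x)\rvert<1$, so the denominators never vanish and $g$ is well defined. Second, $(1-\varphi(x)\overline{\varphi(y)})k(x,y)$ is positive semi-definite by Lemma~\ref{lem:mnormest}, because $\|\varphi\|_{\mathcal{M}_k}\le1$.

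Conjugating a positive semi-definite kernel by $g(x)\overline{g(y)}$ preserves positivity. To see this, replace the coefficients $c_m$ in the definition by $c_m g(x_m)$. Alternatively, note that $g(x)\overline{g(y)}$ is a rank-one positive kernel and invoke the Schur product theorem. Since $1-\lvert a\rvert^2>0$, the kernel $K_a$ is positive semi-definite, and Lemma~\ref{lem:mnormest} then yields $\|\varphi_a\|_{\mathcal{M}_k}\le 1$.

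There is no real obstacle here. The only point requiring care is confirming that $\varphi$ is pointwise bounded by $1$, so that $\varphi_a$ is defined everywhere, and that is precisely what the positivity of $k(x,x)$ provides.
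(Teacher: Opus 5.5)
Your proposal is correct and follows the paper's proof: you bound $\lvert\varphi\rvert\le 1$ using $K(x,x)\ge 0$ and $k(x,x)>0$, write $(1-\varphi_a(x)\overline{\varphi_a(y)})k(x,y)$ as the rank-one kernel $(1-\lvert a\rvert^2)g(x)\overline{g(y)}$ times $K$, and conclude with the Schur product theorem and Lemma~\ref{lem:mnormest}. Your alternative justification, absorbing $g(x_m)$ into the coefficients $c_m$, is an equally valid way to get the positivity.
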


\begin{proof}
	By Lemma~\ref{lem:mnormest}, the kernel
	\[K(x,y) = \left(1-\varphi(x) \overline{\varphi(y)}\right) k(x,y)\]
	is positive semi-definite. In particular $K(x,x) \geq 0$, which when combined with the assumption that $k(x,x) > 0 $ implies that $|\varphi(x)|\leq 1$ for every $x$. This means that $\varphi_a$ is well-defined. We next compute
	\[\left(1-\varphi_a(x)\overline{\varphi_a(y)}\right)k(x,y) = \frac{1-|a|^2}{(1-\overline{a}\varphi(x))(1-a\overline{\varphi(y)})} K(x,y).\]
	By Lemma~\ref{lem:mnormest} and the consequence of the Schur product theorem discussed above, it remains to check that
	\[\frac{1-|a|^2}{(1-\overline{a}\varphi(x))(1-a\overline{\varphi(y)})}\]
	is positive semi-definite. However, this is trivial since every kernel of the form $(x,y) \mapsto g(x)\overline{g(y)}$ is positive semi-definite.
\end{proof}

We turn to the generalization of the first ingredient. We now need to require in addition that the kernel $\kappa = 1-1/k$ is positive semi-definite, so that it is itself the reproducing kernel of a Hilbert function space $\mathcal{H}_\kappa$ on $X$. For an admissible space $\mathcal{H}_k$, the kernel $\kappa$ is positive semi-definite if and only if $\mathcal{H}_k$ is a complete Pick space, since $k$ is normalized at $x_0$ (see e.g.~\cite{Hartz2023}*{Corollary~10.4.10}).

\begin{lemma} \label{lem:hkappa}
	Suppose that $\mathcal{H}_k$ is a reproducing kernel Hilbert space on $X$ and that $k = 1 + \kappa k$ for some positive semi-definite kernel $\kappa$ on $X$. If $\varphi$ is in $\mathcal{H}_\kappa$, then $\varphi$ is in $\mathcal{M}_k$ and
	\[\|\varphi\|_{\mathcal{M}_k} \leq \|\varphi\|_{\mathcal{H}_\kappa}.\]
\end{lemma}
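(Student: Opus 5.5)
The plan is to use Lemma~\ref{lem:mnormest}. We normalize so that $\|\varphi\|_{\mathcal{H}_\kappa}\le 1$; the general case then follows by scaling. Under this normalization we must show that
\[K(x,y)=\bigl(1-\varphi(x)\overline{\varphi(y)}\bigr)k(x,y)\]
is positive semi-definite.

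The first step is to write $k = 1+\kappa k$, which gives
\[K(x,y) = 1 + \kappa(x,y)k(x,y) - \varphi(x)\overline{\varphi(y)}\,k(x,y) = 1 + \bigl(\kappa(x,y)-\varphi(x)\overline{\varphi(y)}\bigr)k(x,y).\]

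The second step applies Lemma~\ref{lem:domination} to the space $\mathcal{H}_\kappa$. Since $\varphi$ lies in the unit ball of $\mathcal{H}_\kappa$, the kernel $\kappa(x,y)-\varphi(x)\overline{\varphi(y)}$ is positive semi-definite. The kernel $k$ is also positive semi-definite, so by the Schur product theorem their pointwise product is positive semi-definite. The constant kernel $1$ is positive semi-definite as well. Hence $K$ is a sum of positive semi-definite kernels and is therefore positive semi-definite.

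The third step invokes Lemma~\ref{lem:mnormest}, which yields $\varphi\in\mathcal{M}_k$ with $\|\varphi\|_{\mathcal{M}_k}\le 1$. Applying this to $\varphi/\|\varphi\|_{\mathcal{H}_\kappa}$ when $\varphi\neq 0$ gives the stated inequality. The case $\varphi=0$ is trivial.

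The main point needing care is that Lemma~\ref{lem:mnormest} characterizes the unit ball of $\mathcal{M}_k$ as a set of functions on $X$. Membership in $\mathcal{M}_k$ therefore comes out of the positivity argument, and we do not need to check separately that $\varphi f\in\mathcal{H}_k$. Apart from this, the argument consists of the algebraic identity in the first step, and I do not expect a genuine obstacle.
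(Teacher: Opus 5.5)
Your proposal is correct and follows the paper's proof essentially verbatim: the identity $(1-\varphi(x)\overline{\varphi(y)})k(x,y) = 1 + (\kappa(x,y)-\varphi(x)\overline{\varphi(y)})k(x,y)$, then Lemma~\ref{lem:domination} applied in $\mathcal{H}_\kappa$, the Schur product theorem, and finally Lemma~\ref{lem:mnormest}. Your scaling argument, with the case $\varphi=0$ treated separately, just spells out the paper's normalization $\|\varphi\|_{\mathcal{H}_\kappa}=1$.
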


\begin{proof}
	We may assume without loss of generality that $\|\varphi\|_{\mathcal{H}_\kappa} = 1$. Since $k = 1 + \kappa k$, we have
	\[\left(1-\varphi(x)\overline{\varphi(y)}\right)k(x,y) = 1 + \left(\kappa(x,y) - \varphi(x)\overline{\varphi(y)}\right)k(x,y).\]
	The kernel $(x,y) \mapsto \kappa(x,y)-\varphi(x)\overline{\varphi(y)}$ is positive semi-definite by Lemma~\ref{lem:domination}, so the right-hand side is positive semi-definite by the consequence of the Schur product theorem discussed above. Lemma~\ref{lem:mnormest} now gives $\|\varphi\|_{\mathcal{M}_k}\leq1$.
\end{proof}

Using Lemma~\ref{lem:hkappa}, we can construct an initial multiplier of norm at most $1$ from each point $\xi$ in $X$ with $k(\xi,\xi)>1$. Such points exist because $k \not\equiv 1$. Our choice of multiplier is the normalized reproducing kernel
\begin{equation} \label{eq:initial}
	\varphi(x) = \frac{\kappa(x,\xi)}{\sqrt{\kappa(\xi,\xi)}}
\end{equation}
of $\mathcal{H}_\kappa$ at $\xi$. For the Szeg\H{o} kernel we have $\kappa(z,w)=\overline{w}z$, so that $\varphi(z) = \overline{\xi}z/\lvert \xi \rvert$. This coincides with the initial multiplier of the classical proof up to a unimodular factor, which has no effect on the argument.

Let us see how the first two ingredients fit together. Since $\|\varphi\|_{\mathcal{M}_k}\leq1$ by construction, we get that $\lvert \varphi(x)\rvert \leq 1$ for every $x$ in $X$ as in the proof of Lemma~\ref{lem:conformal}. Since $\lvert a \rvert<1$, this yields the expansion
\begin{equation} \label{eq:phiaexp}
	\varphi_a(x) = \frac{a-\varphi(x)}{1-\overline{a}\varphi(x)} = a - (1-\lvert a \rvert^2) \sum_{m=1}^\infty \overline{a}^{\,m-1}\varphi^m(x),
\end{equation}
the series being absolutely convergent at every point of $X$. Since $M_{\varphi^m} = M_\varphi^m$, we have $\|\varphi^m\|_{\mathcal{M}_k}\leq1$, so the series converges absolutely in $\mathcal{M}_k$ and \eqref{eq:phiaexp} holds there as well.

Our goal is now to compute $M\varphi_a(\xi)$, which requires the majorants of the powers $\varphi^m$ at $\xi$ and the knowledge that no cancellation occurs when they are assembled via \eqref{eq:phiaexp}. This is the third ingredient in our proof. For the Szeg\H{o} kernel, both these steps are plain, because $\varphi^m$ is a unimodular multiple of $z^m$. In general, $\varphi^m$ is not a multiple of a single $e_\lambda$, and distinct powers may contribute to the same coefficient. We must therefore identify the expansion of $\varphi^m$ viewed as a function in $\mathcal{H}_k$.

\begin{lemma} \label{lem:powers}
	Suppose that $\mathcal{H}_k$ is an admissible reproducing kernel Hilbert space on $X$ with the diagonal complete Pick property and let $m\geq1$. Then
	\begin{equation} \label{eq:kappam}
		\kappa^m(x,y) = \sum_{\lambda \neq 0} c_\lambda(m)\, e_\lambda(x)\overline{e_\lambda(y)}
	\end{equation}
	for coefficients satisfying $0 \leq c_\lambda(m) \leq b_\lambda$. Consequently, if $k(\xi,\xi)>1$ and $\varphi(x) = \kappa(x,\xi)/\sqrt{\kappa(\xi,\xi)}$, then the expansion of $\varphi^m$ in $\mathcal{H}_k$ is
	\begin{equation}\label{eq:varphim}
		\varphi^m = \frac{1}{\kappa(\xi,\xi)^{m/2}} \sum_{\lambda \neq 0} c_\lambda(m) \overline{e_\lambda(\xi)}\, e_\lambda.
	\end{equation}
\end{lemma}
\begin{proof}
	It follows from the Cauchy--Schwarz inequality that the expansion
	\[\kappa(x,y) = \sum_{\lambda \neq 0} c_\lambda e_\lambda(x) \overline{e_\lambda(y)}\]
	converges absolutely in $X \times X$. Multiplying out $\kappa^m$ and rearranging, we therefore obtain the expansion 
	\[\kappa^m(x,y) = \sum_{\lambda \in \Lambda} c_\lambda(m)\, e_\lambda(x)\overline{e_\lambda(y)}\]
	with  
	\[c_\lambda(m) = \sum_{\lambda_1+\cdots+\lambda_m = \lambda} c_{\lambda_1} \cdots c_{\lambda_m} \geq 0.\]
	Notice that the sum defining $c_\lambda(m)$ converges to a finite number, because the rearrangement gives $c_\lambda(m) \lvert e_\lambda(x)\rvert^2 \leq \kappa(x,x)^m$ for every $x$ in $X$ and no $e_\lambda$ vanishes identically since $\{e_\lambda\}_{\lambda \in \Lambda}$ is an orthogonal basis for $\mathcal{H}_k$. Since $\kappa^m(x_0,y) = \kappa(x_0,y)^m = 0$ for every $y$ in $X$, we get $c_0(m)=0$ and hence \eqref{eq:kappam}.
	
	The kernel $\kappa$ is positive semi-definite by the diagonal complete Pick property, and we plainly have $\kappa<1$ on the diagonal since $k(x,x)$ is finite. The Cauchy--Schwarz inequality therefore yields that $\lvert \kappa \rvert<1$, so the series $k = \sum_{j\geq0}\kappa^j$ converges absolutely. Moreover, every power $\kappa^j$ is positive semi-definite by the Schur product theorem. Combining this fact with \eqref{eq:kappam}, we find that the kernel 
	\[k(x,y) - c_\lambda(m)\, e_\lambda(x)\overline{e_\lambda(y)} = \sum_{j \neq m} \kappa^j(x,y) + \left(\kappa^m(x,y) - c_\lambda(m)\, e_\lambda(x)\overline{e_\lambda(y)}\right)\]
	is positive semi-definite. Lemma~\ref{lem:domination} applied to $f=\sqrt{c_\lambda(m)} e_\lambda$ then yields that $c_\lambda(m)\|e_\lambda\|_{\mathcal{H}_k}^2 \leq 1$, which is equivalent to the desired estimate $c_\lambda(m) \leq b_\lambda$.

	We turn to the final assertion. Setting $y=\xi$ in \eqref{eq:kappam}, we see that \eqref{eq:varphim} converges pointwise absolutely in $X$. Since $\{e_\lambda\}_{\lambda\neq0}$ is an orthogonal system in $\mathcal{H}_k$, this is the expansion of $\varphi^m$ in $\mathcal{H}_k$ once we know it defines an element of $\mathcal{H}_k$. Using the bound $c_\lambda(m) \leq b_\lambda$, we find that
	\[\sum_{\lambda\neq0} \frac{\lvert c_\lambda(m)e_\lambda(\xi)\rvert^2}{b_\lambda} \leq \sum_{\lambda\neq0} b_\lambda \lvert e_\lambda(\xi)\rvert^2 = k(\xi,\xi)-1 < \infty,\]
	which completes the proof.
\end{proof}

Neither Lemma~\ref{lem:conformal} nor Lemma~\ref{lem:hkappa} uses admissibility. Lemma~\ref{lem:powers} uses all of it: the semigroup relation and the orthogonality of $\{e_\lambda\}_{\lambda\in\Lambda}$ produce the expansion, and the normalization at $x_0$ keeps the powers of $\kappa$ away from the coefficient at $0$. The diagonal complete Pick property supplies the signs that will preclude cancellation.

\begin{proof}[First part of the proof of Theorem~\ref{thm:main}]
	We let $\xi$ be a point in $X$ such that $k(\xi,\xi)>1$ and $\varphi$ be  as in \eqref{eq:initial}. It follows from Lemma~\ref{lem:powers} that
	\[M\varphi^m(\xi) = \varphi^m(\xi) = \kappa(\xi,\xi)^{m/2},\]
	which we recall is bounded above by $1$. We fix $0<a<1$ and let $\varphi_a$ be as in \eqref{eq:phiaexp}. Appealing again to Lemma~\ref{lem:powers}, we find that
	\[M \varphi_a(\xi) = a + (1-a^2) \sum_{m=1}^\infty a^{m-1} \kappa(\xi,\xi)^{m/2} =  a + \frac{(1-a^2) \sqrt{\kappa(\xi,\xi)}}{1-a \sqrt{\kappa(\xi,\xi)}}. \]
	Since $\|\varphi_a\|_{\mathcal{M}_k}\leq1$ by Lemma~\ref{lem:conformal} and Lemma~\ref{lem:hkappa}, we see that $\xi$ cannot belong to the Bohr set $\mathcal{B}(\mathcal{M}_k)$ if $M \varphi_a(\xi) > 1$ for some $a$ in $(0,1)$. This is equivalent to
	\[\sqrt{\kappa(\xi,\xi)} > \frac{1}{1+2a} \qquad \iff \qquad k(\xi,\xi) > \frac{(1+2a)^2}{4a(1+a)}.\]
	The right-hand side decreases to $9/8$ as $a \to 1^-$. Hence, if $k(\xi,\xi)>9/8$, then $\xi$ does not belong to $\mathcal{B}(\mathcal{M}_k)$.
\end{proof}

\section{The Drury--Arveson space} \label{sec:da}
We again begin in the classical setting, this time recalling the Bohr--Wiener proof that every $z$ in $\mathbb{D}$ with $\lvert z \rvert \leq 1/3$ belongs to the Bohr set of $H^\infty$. The key point is that if $\varphi(z) = \sum_{m\geq0}a_mz^m$ satisfies $\|\varphi\|_{H^\infty}\leq1$, then 
\begin{equation} \label{eq:fwienerclassical}
	\lvert a_m \rvert \leq 1 - \lvert a_0 \rvert^2,
\end{equation}
for $m=1,2,3,\ldots$. It follows that if $\lvert z \rvert \leq 1/3$, then 
\[M\varphi(z) \leq \lvert a_0 \rvert + (1-\lvert a_0\rvert^2) \sum_{m=1}^\infty \lvert z \rvert^m \leq \lvert a_0 \rvert + \frac{1-\lvert a_0 \rvert^2}{2} \leq 1.\]
The key inequality \eqref{eq:fwienerclassical} is typically proved by recognizing that the case $m=1$ can be obtained from the Schwarz lemma and then extending to the case $m\geq2$ by the \emph{F.~Wiener trick}. For expositional reasons we will give a somewhat different proof, where we use F.~Wiener's trick first, followed by manipulations involving conformal maps.

\begin{proof}[Proof of \eqref{eq:fwienerclassical}]
	Fix $m\geq1$. If $\lvert a_0 \rvert = 1$, then $\varphi(z) \equiv a_0$ and there is nothing to prove, so assume that $\lvert a_0 \rvert < 1$. Let $\omega$ be a primitive $m$th root of unity and replace $\varphi$ by the function
	\[\frac{1}{m} \sum_{j=0}^{m-1} \varphi(\omega^j z) = \sum_{n=0}^\infty a_{mn} z^{mn},\]
	which plainly maps $\mathbb{D}$ to itself by the triangle inequality. It is equally plain that
	\[\psi(z) \coloneq \frac{a_0-\varphi(z)}{1-\overline{a_0}\varphi(z)}\]
	satisfies $\|\psi\|_{H^\infty} \leq 1$ and that $\psi$ vanishes to order at least $m$ at the origin. Inverting, we get
	\[\varphi(z) = a_0 - (1-\lvert a_0 \rvert^2) \sum_{n=1}^\infty \overline{a_0}^{\,n-1} \psi^n(z),\]
	where $\psi^n$ vanishes to order at least $mn$ at the origin. Only the term $n=1$ therefore contributes to the coefficient of $z^m$, whence $a_m = -(1-\lvert a_0\rvert^2) \widehat{\psi}(m)$. Parseval's identity gives $\lvert \widehat{\psi}(m) \rvert \leq \|\psi\|_{H^2} \leq \|\psi\|_{H^\infty} \leq 1$, which completes the proof.
\end{proof}

Two features of this argument must be replaced when we pass to the Drury--Arveson space $H^2_d$: the individual Taylor coefficients give way to the homogeneous parts of a multiplier, and the rotations $z \mapsto \omega^j z$ to the corresponding self-maps of $\mathbb{B}_d$. The argument is otherwise the same, with one exception: here it is plain that $\|\psi\|_{H^\infty}\leq 1$, but for the corresponding statement in the Drury--Arveson setting we will use Lemma~\ref{lem:conformal}.

Recall that the $m$-homogeneous part of $F(z) = \sum_\gamma a_\gamma z^\gamma$ in $H^2_d$ is 
\[F_m(z) = \sum_{\lvert \gamma\rvert = m} a_\gamma z^\gamma.\]
The decomposition $F = \sum_{m\geq0} F_m$ is orthogonal, so in particular it holds that $\|F_m\|_{H^2_d} \leq \|F\|_{H^2_d}$.

Let $\theta$ be a real number. It is plain that the operator 
\[U_\theta F(z) \coloneq F(e^{i\theta}z)\] 
is unitary on $H^2_d$ and that $(U_\theta F)_m = e^{i\theta m} F_m$. Since $U_\theta(FG) = (U_\theta F)(U_\theta G)$ whenever $F$, $G$, and $FG$ are in $H^2_d$, it follows that $M_{U_\theta\Phi} = U_\theta M_\Phi U_\theta^{-1}$ for every multiplier $\Phi$, whence
\begin{equation} \label{eq:gauge}
	\|U_\theta\Phi\|_{\mult(H^2_d)} = \|\Phi\|_{\mult(H^2_d)}.
\end{equation}

We are now in a position to establish F.~Wiener's inequality \eqref{eq:fwienerclassical} for the Drury--Arveson space. Since the monomials are orthonormal in $H^2_1 = H^2$, the following statement reduces to \eqref{eq:fwienerclassical} when $d=1$.

\begin{lemma} \label{lem:fwienerda}
	If $\|\Phi\|_{\mult(H^2_d)} \leq 1$, then
	\[\|\Phi_m\|_{H^2_d} \leq 1 - \lvert \Phi(0) \rvert^2\]
	for $m=1,2,3,\ldots$.
\end{lemma}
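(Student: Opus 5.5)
Fix $m\geq1$ and write $a_0=\Phi(0)$. If $\lvert a_0\rvert=1$, then $\lvert\Phi(0)\rvert=1$ together with $\|\Phi\|_{H^2_d}\leq\|\Phi\|_{\mult(H^2_d)}\leq1$ and the orthogonality of the homogeneous decomposition forces $\Phi_n=0$ for $n\geq1$, so there is nothing to prove. We therefore assume $\lvert a_0\rvert<1$ and follow the classical proof of \eqref{eq:fwienerclassical} step by step.

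The first step is the F.~Wiener averaging. With $\omega=e^{2\pi i/m}$, we set
\[\widetilde{\Phi}=\frac{1}{m}\sum_{j=0}^{m-1}U_{2\pi j/m}\Phi.\]
Since $(U_\theta\Phi)_n=e^{i\theta n}\Phi_n$, the function $\widetilde{\Phi}$ has homogeneous parts $\widetilde{\Phi}_n=\Phi_n$ when $m$ divides $n$ and $\widetilde{\Phi}_n=0$ otherwise. In particular $\widetilde{\Phi}(0)=a_0$ and $\widetilde{\Phi}_m=\Phi_m$. By \eqref{eq:gauge} and the triangle inequality in $\mult(H^2_d)$, we get $\|\widetilde{\Phi}\|_{\mult(H^2_d)}\leq1$. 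We then replace $\Phi$ by $\widetilde{\Phi}$.

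The second step is the conformal map. Since $k_d(z,z)\geq1>0$, Lemma~\ref{lem:conformal} applies with $a=a_0$ and shows that
\[\Psi=\frac{a_0-\widetilde{\Phi}}{1-\overline{a_0}\widetilde{\Phi}}\]
satisfies $\|\Psi\|_{\mult(H^2_d)}\leq1$. Moreover, $\Psi$ vanishes to order at least $m$ at the origin, meaning $\Psi_n=0$ for $n<m$. This follows because the composition of $a\mapsto(a_0-a)/(1-\overline{a_0}a)$, analytic near $a_0$, with $\widetilde{\Phi}$ expands as a power series in $\widetilde{\Phi}-a_0$, and $\widetilde{\Phi}-a_0$ only has homogeneous parts of degree at least $m$. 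Inverting the map as in \eqref{eq:phiaexp} gives
\[\widetilde{\Phi}=a_0-(1-\lvert a_0\rvert^2)\sum_{n\geq1}\overline{a_0}^{\,n-1}\Psi^n,\]
and this series converges in $\mult(H^2_d)$, hence in $H^2_d$, because $\|\Psi^n\|_{\mult(H^2_d)}\leq1$ and $\lvert a_0\rvert<1$.

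The third step is to read off degree $m$. Each $\Psi^n$ vanishes to order at least $mn$, since products of homogeneous polynomials add degrees. Taking the orthogonal projection onto $m$-homogeneous polynomials, which is continuous on $H^2_d$, therefore only the term $n=1$ survives, and we obtain $\Phi_m=\widetilde{\Phi}_m=-(1-\lvert a_0\rvert^2)\Psi_m$. Hence
\[\|\Phi_m\|_{H^2_d}\leq(1-\lvert a_0\rvert^2)\|\Psi\|_{H^2_d}\leq(1-\lvert a_0\rvert^2)\|\Psi\|_{\mult(H^2_d)}\leq1-\lvert a_0\rvert^2.\]

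The main point requiring care is justifying the vanishing-order claims and the termwise projection. For this we will rely on norm convergence in $\mult(H^2_d)$, which is contractively contained in $H^2_d$, and on the identity $(FG)_n=\sum_{i+j=n}F_iG_j$ for polynomial-degree bookkeeping. Both are routine given absolute convergence at each point of $\mathbb{B}_d$.
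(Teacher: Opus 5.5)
Your proof is correct and follows the paper's argument step by step: the trivial case $\lvert a_0\rvert=1$, the F.~Wiener averaging via $U_{2\pi j/m}$ and \eqref{eq:gauge}, Lemma~\ref{lem:conformal}, the inversion, and reading off the $m$-homogeneous part. The only difference is how you show that $\Psi$ vanishes to order $m$: you expand the M\"obius map around $a_0$, which is valid only locally near the origin but that is all the homogeneous expansion needs, whereas the paper gets it more directly from the invariance $\Psi(\omega z)=\Psi(z)$ together with $\Psi(0)=0$.
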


\begin{proof}
	Set $a_0 = \Phi(0)$. If $\lvert a_0 \rvert = 1$, then 
	\[1 \geq \|\Phi\|_{\mult(H^2_d)}^2 \geq \|\Phi\|_{H^2_d}^2 = \lvert a_0\rvert^2 + \sum_{m\geq1}\|\Phi_m\|_{H^2_d}^2\]
	forces $\Phi \equiv a_0$ and there is nothing to prove, so assume that $\lvert a_0\rvert<1$. Fix $m\geq1$. Let $\omega$ be a primitive $m$th root of unity and replace $\Phi$ by
	\[\frac{1}{m} \sum_{j=0}^{m-1} U_{2\pi j/m}\Phi,\]
	which, by \eqref{eq:gauge} and the triangle inequality, is again a multiplier of norm at most $1$, has the same value at the origin and the same $m$-homogeneous part, and satisfies $\Phi(\omega z) = \Phi(z)$. By Lemma~\ref{lem:conformal}, the function
	\[\Psi(z) \coloneq \frac{a_0-\Phi(z)}{1-\overline{a_0}\Phi(z)}\]
	satisfies $\|\Psi\|_{\mult(H^2_d)} \leq 1$ and $\Psi(0)=0$. Since also $\Psi(\omega z) = \Psi(z)$, every nonzero homogeneous part of $\Psi$ has degree at least $m$. Inverting, we get
	\[\Phi = a_0 - (1-\lvert a_0 \rvert^2) \sum_{n=1}^\infty \overline{a_0}^{\,n-1} \Psi^n,\]
	where every nonzero homogeneous part of $\Psi^n$ has degree at least $mn$. Only the term $n=1$ therefore contributes to the $m$-homogeneous part, whence $\Phi_m = -(1-\lvert a_0\rvert^2)\Psi_m$. Since the homogeneous parts of $\Psi$ are mutually orthogonal in $H^2_d$, we get $\|\Psi_m\|_{H^2_d} \leq \|\Psi\|_{H^2_d} \leq \|\Psi\|_{\mult(H^2_d)} \leq 1$, which completes the proof.
\end{proof}

To pass from the homogeneous parts of a multiplier to its coefficients, we need the norms of the monomials. Writing $\gamma! = \prod_{j=1}^d \gamma_j!$, we get from the multinomial theorem that
\begin{equation} \label{eq:multinom}
	\langle z,w\rangle_d^m = \sum_{\lvert \gamma \rvert = m} \frac{m!}{\gamma!}\, z^\gamma \overline{w^\gamma},
\end{equation}
so expanding $k_d(z,w) = \sum_{m\geq0} \langle z,w\rangle_d^m$, we find that $\|z^\gamma\|_{H^2_d}^2 = \gamma!/\lvert\gamma\rvert!$.

\begin{proof}[Proof of Theorem~\ref{thm:da}]
	Since $\kappa(z,w) = \langle z,w\rangle_d$, we have $k_d(z,z)>9/8$ if and only if $\|z\|_{\ell^2}>1/3$, so part (i) of Theorem~\ref{thm:main} applied to $H^2_d$ shows that no such $z$ belongs to the Bohr set of $\mult(H^2_d)$. Suppose next that $\Phi(z) = \sum_\gamma a_\gamma z^\gamma$ satisfies $\|\Phi\|_{\mult(H^2_d)}\leq1$. By the Cauchy--Schwarz inequality and \eqref{eq:multinom}, we obtain that
	\[\sum_{\lvert \gamma\rvert=m} \lvert a_\gamma z^\gamma\rvert \leq \left(\sum_{\lvert\gamma\rvert=m} \lvert a_\gamma\rvert^2 \frac{\gamma!}{m!}\right)^{\frac12} \left(\sum_{\lvert\gamma\rvert=m}\frac{m!}{\gamma!}\lvert z^\gamma\rvert^2\right)^{\frac12} = \|\Phi_m\|_{H^2_d}\, \|z\|_{\ell^2}^m.\]
	Summing over $m$ and using Lemma~\ref{lem:fwienerda}, we conclude as in the classical case that if $\|z\|_{\ell^2} \leq 1/3$, then 
	\[M\Phi(z) \leq \lvert a_0\rvert + (1-\lvert a_0\rvert^2) \sum_{m=1}^\infty \|z\|_{\ell^2}^m \leq \lvert a_0\rvert + \frac{1-\lvert a_0\rvert^2}{2} \leq 1. \qedhere\]
\end{proof}

Let us next turn to the second part of the proof of Theorem~\ref{thm:main}, which uses Theorem~\ref{thm:da}. Let $\mathcal{H}_k$ be an admissible reproducing kernel Hilbert space on $X$ with the diagonal complete Pick property and let $c_\lambda \geq 0$ be the coefficients of $\kappa$. Set
\[S \coloneq \{\lambda \neq 0 \,:\, c_\lambda > 0\}\]
and let $d$ denote the cardinality of $S$, which is at least $1$ because $k \not\equiv 1$, and enumerate $S = \{\lambda_j\}_{j=1}^d$. Define $\mathbf{b}_j(x) \coloneq \sqrt{c_{\lambda_j}} e_{\lambda_j}(x)$ and set
\[\mathbf{b}(x) \coloneq (\mathbf{b}_j(x))_{j=1}^d.\]
The main idea is that $\langle \mathbf{b}(x), \mathbf{b}(y) \rangle_d = \kappa(x,y)$. Since $\kappa(x,x)<1$, it follows that $\mathbf{b}$ maps $X$ into $\mathbb{B}_d$ with $\mathbf{b}(x_0)=0$. It also yields the key identity
\begin{equation} \label{eq:kdb}
	k(x,y) = \frac{1}{1-\langle \mathbf{b}(x),\mathbf{b}(y)\rangle_d} = k_d\left(\mathbf{b}(x),\mathbf{b}(y)\right).
\end{equation}
The relevance of \eqref{eq:kdb} is the following result of Agler and McCarthy, which is the precise form of the universality of the Drury--Arveson space that we require.

\begin{lemma} \label{lem:universality}
	Fix $d$ in $\mathbb{N} \cup \{\infty\}$, a set $X$, and a map $\mathbf{b} \colon X \to \mathbb{B}_d$. Let $\mathcal{H}_k$ be the reproducing kernel Hilbert space on $X$ whose kernel is
	\[k(x,y) = k_d\left(\mathbf{b}(x),\mathbf{b}(y)\right).\]
	Then $F \mapsto F \circ \mathbf{b}$ is a co-isometry from $H^2_d$ onto $\mathcal{H}_k$. If moreover $\varphi$ is in $\mathcal{M}_k$, then there is $\Phi$ in $\mult(H^2_d)$ such that
	\[\varphi = \Phi \circ \mathbf{b} \qquad\text{and}\qquad \|\Phi\|_{\mult(H^2_d)} = \|\varphi\|_{\mathcal{M}_k}.\]
\end{lemma}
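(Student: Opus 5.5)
The plan is to split the statement into its two assertions and handle each by transferring kernels. The co-isometry assertion is a kernel computation; the multiplier lifting is the complete Pick extension property of $H^2_d$.

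\emph{The co-isometry.} Let $\mathcal{K} = \overline{\operatorname{span}}\{k_d(\cdot,\mathbf{b}(y)) : y \in X\} \subseteq H^2_d$. Define $T F = F \circ \mathbf{b}$. The adjoint should act on kernels by $T^* k(\cdot,y) = k_d(\cdot,\mathbf{b}(y))$. I would check this directly. The assignment
\[
\sum_n c_n k(\cdot,y_n) \mapsto \sum_n c_n k_d(\cdot,\mathbf{b}(y_n))
\]
is isometric, because both sides have squared norm $\sum_{m,n} c_m\overline{c_n}\, k_d(\mathbf{b}(y_n),\mathbf{b}(y_m))$. It therefore extends to an isometry $V\colon \mathcal{H}_k \to H^2_d$ with range $\mathcal{K}$. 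For $F \in H^2_d$ and $y\in X$ we have
\[
\langle F, V k(\cdot,y)\rangle_{H^2_d} = F(\mathbf{b}(y)).
\]
This shows that $V^*F$ is the function $F\circ\mathbf{b}$. Hence $T = V^*$ is a co-isometry onto $\mathcal{H}_k$, it vanishes on $\mathcal{K}^\perp$, and it is isometric on $\mathcal{K}$.

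\emph{The multiplier lifting.} Take $\varphi\in\mathcal{M}_k$ with $\|\varphi\|_{\mathcal{M}_k}=1$. By Lemma~\ref{lem:mnormest}, the kernel
\[
(1-\varphi(x)\overline{\varphi(y)})\,k_d(\mathbf{b}(x),\mathbf{b}(y))
\]
is positive semi-definite on $X$. First I would check that $\varphi$ factors through $\mathbf{b}$. If $\mathbf{b}(x)=\mathbf{b}(y)$, then $k(\cdot,x)=k(\cdot,y)$, so every element of $\mathcal{H}_k$, in particular $\varphi = M_\varphi 1$, takes the same value at $x$ and $y$. This gives a well-defined function $\phi$ on $E=\mathbf{b}(X)\subseteq\mathbb{B}_d$, and the kernel $(1-\phi(z)\overline{\phi(w)})k_d(z,w)$ is positive on $E$. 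In other words, $\phi$ is a contractive multiplier of the restriction of $H^2_d$ to $E$.

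The main step, and the expected obstacle, is extending $\phi$ from $E$ to a contractive multiplier $\Phi$ of all of $H^2_d$. This is exactly the complete Pick (Nevanlinna--Pick) property of $k_d$; I would cite the Agler--McCarthy / McCullough--Quiggin theorem \cite{AM2002}*{Theorem~8.2 / Chapter~7}, \cite{Hartz2023}. If a self-contained route is wanted, I would argue as follows.
\begin{itemize}
\item On finite subsets of $E$, interpolation follows from the lurking isometry argument. Factor $1-\phi\overline{\phi}$ times $k_d$ using $1/k_d = 1-\langle z,w\rangle$. This produces a unitary colligation and hence a transfer-function realization $\Phi(z)=A+\langle z\rangle B(I-\langle z\rangle D)^{-1}C$, which is a contractive multiplier of $H^2_d$.
\item The general set $E$ is then handled by a weak-$*$ compactness argument over the net of finite subsets, since the unit ball of $\mult(H^2_d)$ is compact in the topology of pointwise convergence.
\end{itemize}
For $d=\infty$ the same argument runs with $\ell^2$-valued row contractions.

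Finally, $\Phi\circ\mathbf{b}=\varphi$ and $\|\Phi\|_{\mult(H^2_d)}\le 1$. The reverse inequality follows from the intertwining
\[
M_\varphi T = T M_\Phi,
\]
which holds because $(\Phi F)\circ\mathbf{b}=\varphi\,(F\circ\mathbf{b})$. Since $T$ is a co-isometry, $T^*$ is an isometry, and for $g\in\mathcal{H}_k$ we get $M_\varphi g = M_\varphi T T^* g = T M_\Phi T^* g$, hence $\|M_\varphi g\|\le\|M_\Phi\|\,\|g\|$. This gives $\|\varphi\|_{\mathcal{M}_k}\le\|\Phi\|_{\mult(H^2_d)}$, so the norms are equal.
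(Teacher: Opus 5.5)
Your proposal is correct and is essentially the argument the paper relies on. The paper gives no proof of its own: it cites Hartz's Theorem~10.4.19 and remarks that only the relation $k(x,y)=k_d(\mathbf{b}(x),\mathbf{b}(y))$ is used, and the standard proof of that theorem is the one you give — pull back kernel functions to get the co-isometry, extend the induced contractive multiplier from $\mathbf{b}(X)$ to $\mathbb{B}_d$ using the complete Pick property of $H^2_d$, and use $M_\varphi T = T M_\Phi$ for the reverse norm inequality. The one step you leave implicit is that $1\in\mathcal{H}_k$ (needed for $\varphi = M_\varphi 1$); it follows from your co-isometry, since $1 = T1$.
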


Lemma~\ref{lem:universality} is \cite{Hartz2023}*{Theorem~10.4.19}, which is stated there for a normalized complete Pick space and the map obtained by Kolmogorov factorization of $1-1/k$. The proof requires nothing of $\mathbf{b}$ beyond the displayed relation, which in our case is \eqref{eq:kdb}.

\begin{proof}[Second part of the proof of Theorem~\ref{thm:main}]
	Let $\varphi$ be in $\mathcal{M}_k$ with $\|\varphi\|_{\mathcal{M}_k}\leq1$. By \eqref{eq:kdb} and Lemma~\ref{lem:universality}, there is $\Phi(z) = \sum_\gamma a_\gamma z^\gamma$ in $\mult(H^2_d)$ with $\varphi = \Phi \circ \mathbf{b}$ and $\|\Phi\|_{\mult(H^2_d)}\leq1$.
	
	In order to use Theorem~\ref{thm:da}, we need to understand the relationship between the majorants of $\varphi$ and $\Phi$. To that end, we fix a finitely supported multi-index $\gamma=(\gamma_j)_{j=1}^d$ of nonnegative integers and use that $e_\lambda e_\mu = e_{\lambda+\mu}$ to compute
	\begin{equation} \label{eq:bmonomial}
		\mathbf{b}(x)^\gamma = \prod_{j=1}^d \mathbf{b}_j(x)^{\gamma_j} = \left(\prod_{j=1}^d c_{\lambda_j}^{\gamma_j/2}\right) e_{\iota(\gamma)}(x),
	\end{equation}
	where $\iota(\gamma) \coloneq \sum_{j=1}^d \gamma_j \lambda_j$. This shows that every monomial in the coordinates of $\mathbf{b}$ is a nonnegative multiple of a single basis function. The monomials form an orthogonal basis of $H^2_d$, so the expansion of $\Phi$ converges in $H^2_d$, and since composition with $\mathbf{b}$ is a co-isometry by Lemma~\ref{lem:universality}, it follows that $\varphi = \sum_\gamma a_\gamma \mathbf{b}^\gamma$ converges in $\mathcal{H}_k$. Taking the inner product with $e_\lambda$ and appealing to \eqref{eq:bmonomial}, we get
	\[\widehat{\varphi}(\lambda) = \sum_{\iota(\gamma) = \lambda} a_\gamma \prod_{j=1}^d c_{\lambda_j}^{\gamma_j/2},\]
	whence
	\begin{equation} \label{eq:transfer}
		M\varphi(x) \leq \sum_\gamma \left\lvert a_\gamma\, \mathbf{b}(x)^\gamma\right\rvert = M\Phi\left(\mathbf{b}(x)\right)
	\end{equation}
	for every $x$ in $X$, by the triangle inequality.

	Suppose finally that $k(x,x)\leq9/8$, so that $\|\mathbf{b}(x)\|_{\ell^2} = \sqrt{\kappa(x,x)} \leq 1/3$. Theorem~\ref{thm:da} gives $M\Phi(\mathbf{b}(x)) \leq \|\Phi\|_{\mult(H^2_d)} \leq 1$, and \eqref{eq:transfer} completes the proof.
\end{proof}

Notice that \eqref{eq:transfer} is in general a strict inequality, since distinct $\gamma$ with $\iota(\gamma)=\lambda$ may contribute to $\widehat{\varphi}(\lambda)$ with cancellation.

It is worth isolating what the diagonal complete Pick property contributed. Lemma~\ref{lem:universality} applies to any factorization $\langle \mathbf{b}(x),\mathbf{b}(y)\rangle_d = \kappa(x,y)$, and such a factorization is available as soon as $\kappa$ is positive semi-definite, by Kolmogorov's theorem. What the requirement $c_\lambda \geq 0$ adds is that the factorization can be carried out in the orthogonal basis $\{e_\lambda\}_{\lambda \in \Lambda}$ itself. Each coordinate of $\mathbf{b}$ is then a multiple of a basis function, whence \eqref{eq:bmonomial} and the transfer \eqref{eq:transfer}. For an arbitrary factorization, the coordinates of $\mathbf{b}$ bear no relation to $\{e_\lambda\}_{\lambda\in\Lambda}$, and there is no reason for the majorant to transfer at all.

\section{Corollary~\ref{cor:powerseries} and Corollary~\ref{cor:diriseries}} \label{sec:fj}
This final section establishes Corollary~\ref{cor:powerseries} and Corollary~\ref{cor:diriseries}.

\begin{proof}[Proof of Corollary~\ref{cor:powerseries}]
	As explained in the introduction, $\mathcal{H}_k$ has the diagonal complete Pick property, so Theorem~\ref{thm:main} applies. Since $k(z,z) = \sum_{j\geq0} b_j \lvert z \rvert^{2j}$ and $r \mapsto k(r,r)$ is strictly increasing on $[0,1)$ with $k(0,0)=1$, the equation $k(\varrho,\varrho)=9/8$ has at most one solution in $[0,1)$. If it has one, then 
	\[\mathcal{B}(\mathcal{M}_k) = \{z \in \mathbb{D}\,:\, \lvert z \rvert \leq \varrho\}.\] Otherwise $k(r,r)<9/8$ for every $0 \leq r<1$ and $\mathcal{B}(\mathcal{M}_k)=\mathbb{D}$.

	It remains to prove that $\varrho \geq 1/3$, with equality only for the Szeg\H{o} kernel. Write $\kappa(z,w) = \sum_{j\geq1} c_j (\overline{w}z)^j$ with $c_j \geq 0$. Since $\kappa(r,r)<1$ for $0\leq r<1$, monotone convergence gives $\sum_{j\geq1}c_j\leq1$ and therefore
	\begin{equation} \label{eq:universaldisc}
		\kappa(r,r) = \sum_{j=1}^\infty c_j r^{2j} \leq r^2 \sum_{j=1}^\infty c_j \leq r^2,
	\end{equation}
	that is, $\sqrt{\kappa(r,r)} \leq r$. It follows from \eqref{eq:famform} that every $z$ with $\lvert z \rvert \leq 1/3$ belongs to $\mathcal{B}(\mathcal{M}_k)$, so that $\varrho \geq 1/3$. If $\varrho=1/3$, then both inequalities in \eqref{eq:universaldisc} are equalities at $r=1/3$. The first forces $c_j = 0$ for every $j\geq2$, and the second then forces $c_1=1$, so that $\kappa(z,w)=\overline{w}z$ and $k$ is the Szeg\H{o} kernel.
\end{proof}

The proof of Corollary~\ref{cor:diriseries} goes along the same lines as that of Corollary~\ref{cor:powerseries}, and we omit it. Let us only point out that the critical abscissa comes out as $\sigmaa+\log3/\log2$ because the computation \eqref{eq:universaldisc} gives $\sqrt{\kappa(\sigma,\sigma)} \leq 2^{-(\sigma-\sigmaa)}$ in place of $\sqrt{\kappa(r,r)}\leq r$, and $2^{-(\sigma-\sigmaa)} \leq 1/3$ precisely when $\sigma \geq \sigmaa+\log3/\log2$.

\bibliography{bohrmultcnp}

\end{document}